\newcommand{\bq}{\mathcal{Q}}
\newcommand{\bn}{\mathcal{N}}
\newcommand{\mF}{\mathcal{F}}
\newcommand{\mO}{\mathcal{O}}
\newcommand{\ZZ}{{\bf Z}}
\newcommand{\C}{{\bf C}}
\newcommand{\mL}{\mathcal{L}}
\newcommand{\VV}{\mathbbm{V}}
\newcommand{\ba}{\boldsymbol{a}}
\newcommand{\be}{\boldsymbol{e}}
\newcommand{\bu}{\boldsymbol{u}}
\newcommand{\bv}{\boldsymbol{v}}
\newcommand{\bw}{\boldsymbol{w}}
\newcommand{\ff}{\boldsymbol{f}}
\newcommand{\bg}{\boldsymbol{g}}
\newcommand{\bG}{\boldsymbol{G}}
\newcommand{\bC}{\boldsymbol{C}}
\newcommand{\E}{\boldsymbol{E}}
\newcommand{\U}{\boldsymbol{U}}
\newcommand{\mB}{\mathcal{B}}
\newcommand{\cM}{\mathcal{M}}
\newcommand{\cH}{\mathcal{H}}
\newcommand{\mT}{\mathcal{T}}
\newcommand{\mP}{\mathcal{P}}
\newcommand{\N}{{\bf N}}
\newcommand{\NN}{{\mathbb N}}
\newcommand{\R}{{\mathbb R}}
\newcommand{\ve}{{\overline{e}}}
\newcommand{\ou}{{\overline{u}}}
\newcommand{\ox}{{\overline{x}}}
\newcommand{\oy}{{\overline{y}}}
\newcommand{\oq}{{\overline{q}}}
\newcommand{\ov}{{\overline{v}}}
\newcommand{\oR}{{\overline{\R^n_+}}}
\newcommand{\on}{{\overline{\nu}}}
\newcommand{\ux}{{\underline{x}}}
\newcommand{\uy}{{\underline{y}}}
\newcommand{\HH}{\mathbbm{H}}
\newcommand{\De}{{\Delta}}
\newcommand{\na}{{\nabla}}
\newcommand{\pa}{{\partial}}
\newcommand{\cn}{{\nu}}
\newcommand{\ro}{{\rho}}
\newcommand{\ta}{{\tau}}
\newcommand{\la}{{\lambda}}
\newcommand{\de}{{\delta}}
\newcommand{\om}{{\omega}}
\newcommand{\Om}{{\Omega}}
\newcommand{\Ov}{{\overline{\Omega}}}
\newcommand{\Ga}{{\Gamma}}
\newcommand{\ua}{{\underline{a}}}
\newcommand{\ub}{{\underline{b}}}
\newcommand{\ul}{{\underline{l}}}
\newcommand{\n}{{\underline{n}}}
\newcommand{\ui}{{\underline{i}}}
\newcommand{\al}{{\alpha}}
\newcommand{\te}{{\theta}}
\newcommand{\bt}{{\beta}}
\newcommand{\utau}{{\underline{\tau}}}
\newcommand{\ugamma}{{\underline{\gamma}}}
\newcommand{\z}{{\zeta}}
\newcommand{\ci}{{\chi}}
\newcommand{\m}{{\mu}}
\newcommand{\ep}{{\epsilon}}
\newcommand{\com}{{\widetilde{\omega}}}
\newcommand{\cphi}{{\widetilde{\phi}}}
\newcommand{\cf}{{\widetilde{f}}}
\newcommand{\cu}{{\widetilde{u}}}
\newcommand{\nn}{\boldsymbol{n}}
\newcommand{\ch}{{\widehat{h}}}
\newcommand{\hu}{{\widehat{u}}}
\newcommand{\hw}{{\widehat{w}}}
\newcommand{\hb}{{\widehat{b}}}
\newcommand{\cv}{{\widehat{v}}}
\newcommand{\cb}{{\widehat{b}}}
\newcommand{\cK}{{\mathcal{K}}}
\newcommand{\K}{{\bf K}}
\newcommand{\Pf}{{\Phi}}
\newcommand{\pf}{{\phi}}
\newcommand{\ps}{{\psi}}
\newcommand{\bB}{\boldsymbol{B}}
\newcommand{\bD}{\boldsymbol{D}}
\def\dy{\displaystyle}
\def\vs1{\vspace{1ex}}
\newtheorem{theorem}{Theorem}[section]
\newtheorem{lemma}[theorem]{Lemma}
\newtheorem{proposition}{Proposition}[section]
\newtheorem{assumption}{Assumption}[section]
\newtheorem{problem}{Problem}[section]
\numberwithin{equation}{section}
\def\be{\begin{equation}}
\def\ee{\end{equation}}
\def\ed{\end{document}}
\begin{document}
\title{\bf\normalsize On some regularity results for $\,2-D\,$ Euler equations and linear elliptic b.v. problems.}
\author{by H.~Beir\~ao da Veiga}
%\date{}
\maketitle
\textit{\phantom{aaaaaaaaaaaaaaaaaaaaaaaaaa}}
%
%\textit{\phantom{aaaaaaaaaaaaaaaaaaaaaaaaaa}}
\bigskip

\begin{abstract}
About thirty years ago we looked for "minimal assumptions" on the
data which guarantee that solutions to the $\,2-D\,$ evolution Euler
equations in a bounded domain are classical. Classical means here
that all the derivatives appearing in the equations and boundary
conditions are continuous up to the boundary. Following a well known
device, the above problem led us to consider this same regularity
problem for the Poisson equation under homogeneous Dirichlet
boundary conditions. At this point, one was naturally led to
consider the extension of this last problem to more general linear
elliptic boundary value problems, and also to try to extend the
results to more general data spaces. At that time, some side results
in these directions remained unpublished. The first motivation for
this note is a clear description of the route followed by us in
studying these kind of problems. New results and open problems are
also considered.

\vspace{.2cm}

{\bf Mathematics Subject Classification}: 26B30,\,
26B35,\,35A09,\,35B65,\\
\,35J25,\,35Q30.

\vspace{.2cm}

{\bf Keywords.} linear elliptic boundary value problems, Stokes
system, Euler equations, classical solutions, continuity of higher
order derivatives, functional spaces, Dini's continuity.
\end{abstract}
\bibliographystyle{amsplain}
\section{Introduction.}\label{inttas}%
In this note the central problem is the following. Looking for
"minimal assumptions" on the data which guarantee that solutions to
a specific stationary, or evolution, problem are classical. This
problem is called here the \emph{minimal assumptions problem}. We
say that solutions are \emph{classical} if all derivatives appearing
in the equations are continuous up to the boundary on their domain
of definition.\par%
The starting point of these notes is reference \cite{BVJDE}, where
the main goal was to look for \emph{minimal assumptions} on the data
which guarantee \emph{classical} solutions to the $\,2-D\,$ Euler
equations in a bounded domain
\begin{equation}
\label{alho}%
\left\{
\begin{array}{l}\vspace{0.5ex}
\partial_t\,\bv +\,(\bv\cdot\,\nabla)\,\bv+\,\nabla\,\pi=\,\ff
\quad \textrm{in} \quad Q\equiv \,\R^+ \times \Om\,,\\
{div}\,\bv=\,0 \quad \textrm{in} \quad Q\,;\\
\bv \cdot \,\n=\,0 \quad \textrm{on} \quad \R \times \Ga \,; \\%
\bv(0)=\,\bv_0 \quad \textrm{in} \quad \Om\,,
\end{array}
\right.%
\end{equation}
where here, and everywhere in the sequel, the initial data
$\,\bv_0\,$ is assumed to be divergence free and tangential to the
boundary. Merely for simplicity, every time we are concerned with
$\,2-D\,$ Euler equations, we assume that the domain $\,\Om\,$ is
simply connected. Recall that in the $\,2-D\,$ case the
curl of the velocity can be, and is here, identified with a scalar field.\par%
We start by briefly explaining why the curl of the velocity plays a
central role in equation \eqref{alho}. This follows from a quite
favorable situation, fruit of the lucky combination of two distinct
facts. The first one is that the second and the third equations in
the well known elliptic system \eqref{naosei2}, see below, are still
included in \eqref{alho}. Hence, at each fixed time, the solution
$\,\bv(t)\,$ of problem \eqref{alho} is completely determined by the
\emph{scalar} quantity $\,{curl}\,\bv(t)\,$. The second favorable
feature is that, as a rule, this last quantity  "is transported by
the characteristics" (assume, for simplicity, that $\ff=\,0\,$).\par%
The above setup led us, in reference \cite{BVJDE}, to look for data
spaces $\,C_*(\,\Ov)\,,$ as large as possible, for which the two
assumptions below hold. In the second assumption, a loss of
regularity going from the curl to the gradient is deliberately
allowed since in the minimal assumptions problem nothing more than
continuity is required to $\,\na\,\bv\,$. Roughly speaking, getting
more than continuity, could mean that assumptions on the data are
not "minimal".
\begin{assumption}
The space $\,C_*(\Ov)\,$ satisfies the following property. If
$\,{curl}\,\bv_0 \in \,C_*(\Ov)\,$, and $\,{curl}\,\ff \in
\,L^1(\,\R^+;\, C_*(\Ov)\,)\,$, then the global solution $\,\bv\,$
of problem \eqref{alho} satisfies
$\,{curl}\,\bv \in\,C(\,\R^+\,;\,C_*(\Ov)\,)\,.$%
\label{ppdois}
\end{assumption}
\begin{assumption}
The space $\,C_*(\Ov)\,$ satisfies the following property. If $\,\te
\in \,C_*(\Ov)\,,$ then $\,\na\, \bv \in\,\C(\Ov)\,,$ and
$\,\|\,\na\,\bv\,\| \leq \,c_0\,\|\,\te\,\|_*\,,$ where $\,\bv\,$ is
the solution to the problem \eqref{naosei2}. So, for divergence free
vector fields, tangential to the boundary, the estimate $\,
\|\,\na\,\bv\,\| \leq \,c_0\,\|\,{curl}\,\bv\,\|_*\,$
holds.%
\label{laplaces22-cestar}
\end{assumption}
It is worth noting that in this section we proceed  as if we have
not already found a suitable space $\,C_*(\Ov)\,.$ We are just
setting up the problem. We believe this helps the understanding of
our approach. Following this idea, we introduce our specific
functional space $\,C_*(\Ov)\,$ only in the next section.\par%
\emph{If} a functional space $\,C_*(\Ov)\,$ satisfies the two above
assumptions, the following result is immediate.
\begin{proposition}
Assume that the functional space  $\,C_*(\Ov)\,$ enjoys the above
assumptions \ref{ppdois} and \ref{laplaces22-cestar}. Furthermore,
let $\,{curl}\,\bv_0 \in \,C_*(\Ov)\,$, and $\,{curl}\,\ff \in
\,L^1(\,\R^+;\, C_*(\Ov)\,)\,$. Then, the global solution $\,\bv,\,$
to problem \eqref{alho} is classical:
\begin{equation}
\na\,\bv \in\,\C(\,\R^+\,;\,C(\Ov)\,)\,.%
\label{zeuler-cestar-bis}
\end{equation}
\label{eulas-cestar-bis}
\end{proposition}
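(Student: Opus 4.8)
The plan is to chain the two hypotheses together, in exactly the order in which they are stated, so that the proof reduces to a one-line composition. First I would invoke Assumption \ref{ppdois}. Its hypotheses --- namely ${curl}\,\bv_0 \in C_*(\Ov)$ and ${curl}\,\ff \in L^1(\R^+; C_*(\Ov))$ --- are precisely those assumed in the Proposition, so it applies verbatim and yields
\[
{curl}\,\bv \in C(\R^+; C_*(\Ov))\,.
\]
In particular the map $t \mapsto \te(t) := {curl}\,\bv(t)$ takes values in $C_*(\Ov)$ and is continuous in time with respect to the $\|\cdot\|_*$ norm.

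Next, I would freeze the time variable and apply Assumption \ref{laplaces22-cestar} slice by slice. At each fixed $t$ the field $\bv(t)$ is divergence free and tangential to the boundary --- both constraints are built into system \eqref{alho} and persist for all time --- and the simple connectedness of $\Om$ makes $\bv(t)$ the unique such field with prescribed curl $\te(t)$. Since $\te(t) \in C_*(\Ov)$, Assumption \ref{laplaces22-cestar} gives $\na\,\bv(t) \in C(\Ov)$ together with the bound $\|\na\,\bv(t)\| \leq c_0\,\|\te(t)\|_*$. This already delivers the pointwise-in-time regularity $\na\,\bv(t) \in C(\Ov)$ for every $t \in \R^+$.

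Finally, to upgrade these time slices to the joint statement $\na\,\bv \in C(\R^+; C(\Ov))$, I would use the linearity of the div-curl solution operator underlying Assumption \ref{laplaces22-cestar}. Because $\te \mapsto \bv$ is linear, so is $\te \mapsto \na\,\bv$, and the estimate of Assumption \ref{laplaces22-cestar} says exactly that this map is bounded from $C_*(\Ov)$ into $C(\Ov)$. Applying it to the difference $\te(t_1) - \te(t_2) = {curl}\,(\bv(t_1) - \bv(t_2))$ gives
\[
\|\na\,\bv(t_1) - \na\,\bv(t_2)\| \leq c_0\,\|\,{curl}\,\bv(t_1) - {curl}\,\bv(t_2)\,\|_*\,,
\]
and the right-hand side tends to $0$ as $t_1 \to t_2$ by the time continuity obtained in the first step. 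Hence $t \mapsto \na\,\bv(t)$ is continuous from $\R^+$ into $C(\Ov)$, which is the asserted \eqref{zeuler-cestar-bis}. I do not expect any genuine obstacle --- the word ``immediate'' in the statement is justified --- and the only point that deserves a line of care is this last one: it is boundedness \emph{together with} linearity of the elliptic solve that converts the a priori merely pointwise-in-time bounds into continuity in time.
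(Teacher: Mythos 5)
Your proposal is correct and is exactly the argument the paper has in mind: the paper states the Proposition as ``immediate'' precisely because Assumption \ref{ppdois} gives $\,{curl}\,\bv \in C(\R^+;C_*(\Ov))\,$ and Assumption \ref{laplaces22-cestar}, applied through the linear solution operator of \eqref{naosei2} (well defined since $\bv(t)$ is divergence free, tangent to the boundary, and $\Om$ is simply connected), transfers that time continuity to $\na\,\bv$ in $C(\Ov)$ via the estimate $\|\na\,\bv(t_1)-\na\,\bv(t_2)\| \leq c_0\,\|\,{curl}\,\bv(t_1)-{curl}\,\bv(t_2)\,\|_*$. Your closing remark correctly identifies the only point of substance --- that linearity plus boundedness, not the slice-by-slice bound alone, yields continuity in time --- which is the same chaining the paper later instantiates concretely in Theorem \ref{eulas} via Theorems \ref{eulaszeta} and \ref{rotlaplaces}.
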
%
Clearly, the above two assumptions and proposition hold for our
specific space $\,C_*(\Ov)\,$. See Theorems \ref{eulaszeta},
\ref{rotlaplaces}, and \ref{eulas}.\par%
Assumptions \ref{ppdois} and \ref{laplaces22-cestar} alone are
clearly insufficient to determine  good choices of spaces
$\,C_*(\Ov)\,$, since arbitrarily high regularity to the solutions
is not imposed. Actually, our interest in the minimal assumption
problem comes from the particular situation in which data are
H\"older continuous. In fact, the above two assumptions hold by
setting $\,C_*(\,\Ov)=\,C^{0,\,\la}(\,\Ov)\,.$ However a H\"older
continuity assumption on the data is unnecessarily restrictive,
since it adds too much regularity to our continuity requirement, see
\eqref{mates}. On the other hand, the choice
$\,C_*(\,\Ov)=\,C(\,\Ov)\,$ is too wide. In this case assumption
\ref{laplaces22-cestar} holds, however assumption \ref{ppdois} is
false in this setting. In conclusion, a basic problem in reference
\cite{BVJDE} was to single out functional spaces $\,C_*(\,\Ov)\,$
which satisfy assumptions \ref{ppdois} and \ref{laplaces22-cestar},
and for which the embeddings
\begin{equation}
\quad C^{0,\,\la}(\Ov) \subset \,C_*(\Ov)\, \subset \,C(\Ov)\,,%
\label{oito}
\end{equation}
are strict. In conclusion, a main problem was the following.
\begin{problem}
Look for Banach spaces $\,C_*(\Ov)\,,$ which strictly satisfy the
inclusions \eqref{oito}, and enjoy assumptions \ref{ppdois} and
\ref{laplaces22-cestar}.%
\label{oproblema}
\end{problem}%
In reference \cite{BVJDE} we singled out a specific functional space
$\,C_*(\Ov)\,$ which enjoys the three requirements stated in problem
\ref{oproblema}. We postpone the definition of our specific space
$\,C_*(\Ov)\,$ to section \ref{one} below. Obviously, there may
exist other significant functional spaces satisfying the required
properties.%

\vspace{0.2cm}

In the next sections we turn back to the effective resolution of the
above, and related, problems. In fact, study and resolution of the
above problems opens the way to new problems. First of all, problems
related to the \emph{minimal assumptions problem} for more general
elliptic boundary value problems. In \cite{BVJDE}, the minimal
regularity problem for the elliptic system \eqref{naosei2} was
confined to a similar regularity problem for equation
\eqref{lapsim}. Theorem \ref{rotlaplaces} was sufficient for our
purposes. However, at that time, as remarked in \cite{BVJDE}, we had
proved an extension of this result to more general elliptic boundary
value problems (the proof remained unpublished, even though we were
not able to find it in the current literature). Further, in a recent
paper, we extended the proof to the stationary Stokes system, see
Theorem \ref{teoum} below. Similar results hold for more general
linear elliptic problems, as the reader may verify, since the proof
depends only on the behavior of the associated Green's functions.\par%
An interesting research field is the extension of the results to
larger data spaces. In the sequel we refer to two distinct possible
extensions. Unfortunately, we merely obtain partial extensions. As
an example of this situation, compare Theorem \ref{teoum} with
Theorem \ref{prop2}, where continuity is replaced by boundedness.\par%
Finally, partial extensions of theorems \ref{eulaszeta} and
\ref{eulas} to initial data in a functional space $\, B_*(\Ov)\,,$
which strictly contains $\,C_*(\,\Ov)\,$, are shown in section
\ref{bbesp}, see theorems \ref{zeulerin} and \ref{zeulerindos}.

\vspace{0.2cm}

\emph{Plan of the paper:}\par%
In section \ref{one} we recall definition and properties of the real
space $\,C_*(\,\Ov)\,$ introduced in reference \cite{BVJDE}.\par%
In section \ref{conhecidos} we recall some results on Euler
equations and elliptic problems with $\,C_*(\Ov)\,$ data.\par%
In section \ref{doisemeio} we consider Stokes, and other elliptic
problems, with data in $\,C_*(\Ov)\,$. In particular, we show the
connection between problems \eqref{naosei2} and \eqref{lapsim}.\par%
In section \ref{two} we introduce new data spaces, denoted
$\,B_*(\Ov)\,$ and $\,D_*(\Ov)\,,$ satisfying the inclusions
$$
\,C_*(\Ov) \subset\,\,B_*(\Ov) \subset \,D_*(\Ov)\,.
$$
We also introduce a new family of Banach spaces, denoted
$\,D^{0,\,\al}(\Ov)\,,$ a kind of weak extension of the classical
family of H\"older spaces.\par%
In section \ref{thres} the full aim would be to extend the results
with data in $\,C_*(\Ov)\,$ to data in the new spaces $\,B_*(\Ov)\,$
and $\,D_*(\Ov)\,.$ Some partial extension results are shown for
solutions to the Stokes equations. Second order derivatives of
solutions are bounded but, possibly, not continuous. The proofs
depend essentially on the properties of the related Green's functions.\par%
In section \ref{bbesp} we try to extend the results proved in
reference \cite{BVJDE} for the Euler equations with data in
$\,C_*(\,\Ov)\,$, to data in $\,B_*(\,\Ov)\,.$ The extension
obtained is partial, since continuity is replaced by boundedness,
and external forces vanish.%

\vspace{0.2cm}

\emph{Acknowledgement}. Reference \cite{BVJDE} was partially
prepared when the author was  Professor at the Mathematics
Department and the ``Mathematics Research Center'', University of
Wisconsin-Madison,(October 1981--March 1982), and at the
University of Minnesota -- Minneapolis (March 1982 -- June 1982).%
The above paper appears first in the 1982 reference \cite{BVMRC}.
The author is grateful to Professor Robert E.L. Turner for the above
invitation to Madison. The author would also like to take this
occasion to thank Bob for the continuous help in correcting the
English of many papers, together with mathematical advice and
remarks.

%\part{}\label{partone}%

\vspace{0.3cm}

\section{The real space $\,C_*(\Ov)\,.$}\label{one}%
In this section, following \cite{BVJDE}, we introduce a specific
space $\,C_*(\Ov)\,$ which enjoys the three requirements stated in
problem \ref{oproblema}. Furthermore, we recall some of the main
properties of this space. For complete proofs see \cite{BVSTOKES}.\par%
We start with some notation. In the following $\Om$ is an open,
bounded, connected set in $\R^n\,$, locally situated on one side of
its boundary $\,\Ga\,.$  The boundary $\Ga$ is of class
$\,C^{2,\,\la}\,,$ for some $\,\la\,,$ $\,0<\,\la \leq \,1\,.$ In
considering the Euler equations it is always assumed that
$\,n=\,2\,,$ a crucial assumption here. On the contrary, in
considering the Stokes equations we assume, merely to simplify index
notation, that $\,n=\,3\,.$ Proofs immediately apply to the $n-$dimensional case.\par%
By $\,C(\Ov)\,$ we denote the Banach space of all real continuous
functions $\,f\,$ defined in $\,\Ov\,$. The classical norm in this
space is denoted by $ \|\,f\,\|\,. $ We also appeal to the classical
spaces $\,C^k(\Ov)\,$ endowed with the standard norm $
\|\,u\,\|_k\,,$ and to the H\"older spaces $\,C^{0,\,\la}(\Ov)\,,$
endowed with the standard norm $\,\|\,f\,\|_{0,\,\la}\,.$ In
particular $\,C^{0,\,1}(\Ov)\,$ is the space of Lipschitz continuous
functions in $\,\Ov\,.$ Further, $\,C^{\infty}(\Ov)\,$ denotes the
set of all
restrictions to $\Ov\,$ of indefinitely differentiable functions in  $\R^n\,$.\par%
Boldface symbols refer to vectors, vector spaces, and so on.
Components of a generic vector $\,\bv\,$ are indicated by $\,v_i\,$,
with  similar notation for tensors. Norms in function spaces, whose
elements are vector fields, are defined in the usual way by means of
the corresponding norms of the components.\par%
We denote by $\,\boldsymbol{e}_i\,$,  $\,i=\,1,\,2,\,3\,,$ the three
Cartesian coordinate unit vectors in  $\R^3\,$. When considering the
$\,2-D$ Euler equations, the planar motion is that generated by the
couple
$\,\boldsymbol{e}_1\,$, $\,\boldsymbol{e}_2\,$.\par%
The symbols $c,\,c_0\,,c_1,\,...,$ denote positive constants
depending at most on $\Om\,$ and $\,n\,.$ We may use the same symbol
to denote different constants.

\vspace{0.2cm}

Next, we define $\,C_*(\Ov)\,,$ and recall some properties of this
functional space. Set
$$
I(x;\,r)=\,\{\,y:\,|y-\,x| \leq\, r\,\}\,, \quad\,\Om(x;\,r)=\, \Om
\,\cap\,I(x;\,r)\,, \quad  \Om_c (x;\,r)=\,\Om-\,\Om(x;\,r)\,.
$$\,.

For $\,f \in \,C(\Ov)\,$ we define, for each $\,r>\,0\,,$
\begin{equation}
\om_f(r) \equiv \, \sup_{\,x,\,y \in\,\Om(x;\,r)} \,|\,f(x)-\,f(y)\,|\,.%
\label{cinco}
\end{equation}
In \cite{BVJDE} we introduced the semi-norm
\begin{equation}%
[\,f\,]_* =\,[\,f\,]_{*,\,\de} \equiv \int_0^\ro \,\om_f(r) \,\frac{dr}{r}\,.%
\label{seis}
\end{equation}
The finiteness of the above integral is known as \emph{Dini's
continuity condition}, see \cite{gilbarg}, equation (4.47). In this
reference, problem 4.2, it is remarked that if $\,f\,$ satisfies
Dini's condition in the whole space $\,\R^n\,$, then its Newtonian
potential is a $\,C^2\,$ function in $\,\R^n\,$.\par%
We define
\begin{equation}
C_*(\Ov) \equiv\,\{\,f \in\,C(\,\Ov): \,[\,f\,]_*
<\,\infty\,\}\,.%
\label{cstar}
\end{equation}
A norm is introduced by setting
$$
\,\|\,f\,\|_{*,\,\de}\equiv\,[\,f\,]_{*,\,\de}+\,\|\,f\,\|\,.
$$
Since
\begin{equation}%
[\,f\,]_{*,\,\rho_1} \leq\,[\,f\,]_{*,\,\rho_2}
\leq\,[\,f\,]_{*,\,\rho_1}+\,2\,\Big(\,\log{\frac{\ro_2}{\ro_1}}\Big)\,\|\,f\,\|\,,
\label{igual}
\end{equation}
for $\,0<\,\ro_1 <\,\ro_2\,,$ norms are essentially independent of
$\,\ro\,.$\par%
Note that by setting
\begin{equation}
\om_f(x;\,r)= \, \sup_{y \in\,\Om(x;\,r)}\,|\,f(x)-\,f(y)\,|\,,%
\label{vinte}
\end{equation}
it follows that
\begin{equation}%
[\,f\,]_* =\,\int_0^\ro  \,\sup_{ \,x \in\,\Ov}\, \om_f(x;\,r)\,
\,\frac{dr}{r}\,.%
\label{catriz}
\end{equation}

\vspace{0.2cm}

The main properties of $\,C_*(\Ov)\,$ are the following.
\begin{theorem}
$\,C_*(\Ov)\,$ is a Banach space.%
\label{propum}
\end{theorem}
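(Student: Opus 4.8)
The plan is to establish the two defining features of a Banach space separately: first that $\|\,\cdot\,\|_{*,\de}$ is genuinely a norm, and then that $C_*(\Ov)$ is complete under it. The norm axioms are essentially routine. Homogeneity and subadditivity of the seminorm $[\,\cdot\,]_*$ follow at once from the corresponding properties of the modulus $\om_f(r)$: from the triangle inequality inside the supremum in \eqref{cinco} one obtains $\om_{f+g}(r)\le\om_f(r)+\om_g(r)$ and $\om_{cf}(r)=|c|\,\om_f(r)$, and integrating against $dr/r$ preserves both. Adding the sup-norm term $\|\,f\,\|$ secures positive definiteness, so $\|\,\cdot\,\|_{*,\de}$ is indeed a norm.

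The substance of the proof is completeness. I would take $\{f_k\}$ Cauchy in $C_*(\Ov)$. Since $\|\,f\,\|\le\|\,f\,\|_{*,\de}$, the sequence is also Cauchy in the sup norm, so by completeness of $C(\Ov)$ there is an $f\in C(\Ov)$ with $f_k\to f$ uniformly. It then remains to show that $f\in C_*(\Ov)$ and that $[\,f_k-f\,]_*\to0\,$.

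The key step is a limit passage through the integral in \eqref{seis}. First I would record the elementary fact that the modulus is Lipschitz with respect to the sup norm: inserting $\pm h(x)\pm h(y)$ inside the supremum in \eqref{cinco} gives $|\om_g(r)-\om_h(r)|\le2\,\|\,g-h\,\|$ for every $r$ and all $g,h\in C(\Ov)$. Applied with $g=f_k-f$ and $h=f_k-f_m$, this yields, for each fixed $r$, the pointwise convergence $\om_{f_k-f_m}(r)\to\om_{f_k-f}(r)$ as $m\to\infty$, because $f_m\to f$ uniformly. Given $\ep>0$, choose $N$ with $[\,f_k-f_m\,]_*<\ep$ whenever $k,m\ge N$. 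Fatou's lemma, applied to the nonnegative integrand against the measure $dr/r$ on $(0,\ro)$, then gives
\[
[\,f_k-f\,]_*=\int_0^\ro \om_{f_k-f}(r)\,\frac{dr}{r}\le\liminf_{m\to\infty}\int_0^\ro\om_{f_k-f_m}(r)\,\frac{dr}{r}=\liminf_{m\to\infty}[\,f_k-f_m\,]_*\le\ep
\]
for every $k\ge N$. Hence $[\,f_k-f\,]_*\to0$; in particular $[\,f-f_N\,]_*<\infty$, and subadditivity yields $[\,f\,]_*\le[\,f-f_N\,]_*+[\,f_N\,]_*<\infty$, so $f\in C_*(\Ov)$. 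Combining this with uniform convergence gives $\|\,f_k-f\,\|_{*,\de}=[\,f_k-f\,]_*+\|\,f_k-f\,\|\to0$, which finishes the argument.

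I expect the only delicate point to be this last limit passage. The naive bound $\om_{f_k-f}(r)\le\om_{f_k-f_m}(r)+2\,\|\,f_m-f\,\|$ is useless here, since integrating the constant $2\,\|\,f_m-f\,\|$ against $dr/r$ diverges at the origin; it is precisely the Dini weight $dr/r$ that forces a monotone/Fatou argument rather than a uniform estimate. Everything else reduces to bookkeeping with the triangle inequality.
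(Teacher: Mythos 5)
Your proof is correct, but note that there is no in-paper argument to compare it against: the paper only states Theorem \ref{propum} and defers the proofs of the properties of $C_*(\Ov)$ to the reference \cite{BVSTOKES}. Judged on its own merits, your argument is the natural and standard one, and every step checks out: the norm axioms do reduce to subadditivity and homogeneity of $\om_f(r)$; the Lipschitz estimate $|\,\om_g(r)-\om_h(r)\,|\le 2\,\|\,g-h\,\|$ is valid (insert $\pm h(x)\pm h(y)$ as you indicate) and gives the pointwise-in-$r$ convergence $\om_{f_k-f_m}(r)\to\om_{f_k-f}(r)$; and Fatou's lemma applies since the integrands are nonnegative and measurable --- a point you could make explicit, though it is immediate because $r\mapsto\om_g(r)$ is nondecreasing. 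Your diagnosis of the one delicate point is exactly right and is the heart of the matter: the naive bound $\om_{f_k-f}(r)\le\om_{f_k-f_m}(r)+2\,\|\,f_m-f\,\|$ fails because the constant is not integrable against $dr/r$ near $r=0$, which is precisely why a Fatou (or monotone-convergence) passage is forced; this is the same mechanism by which completeness is proved for H\"older and other Dini-type seminorm spaces, and it is in the spirit of the proof given in \cite{BVSTOKES}. The closing bookkeeping ($f\in C_*(\Ov)$ via $[\,f\,]_*\le[\,f-f_N\,]_*+[\,f_N\,]_*$, then $\|\,f_k-f\,\|_{*,\,\de}\to 0$) is complete as written.
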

\begin{theorem}
The embedding $\, C_*(\Ov) \subset \,C(\Ov)\,$ is compact.%
\label{propum2}
\end{theorem}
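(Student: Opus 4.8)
The plan is to deduce the compactness from the Arzel\`a--Ascoli theorem. Since $\,\Ov\,$ is compact, a subset of $\,C(\Ov)\,$ is relatively compact exactly when it is bounded and equicontinuous; so it suffices to check that any set $\,B\,$ bounded in $\,C_*(\Ov)\,$, say $\,\|\,f\,\|_{*,\,\ro} \leq \,M\,$ for every $\,f \in \,B\,$, is bounded and equicontinuous when viewed inside $\,C(\Ov)\,$.

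Boundedness in $\,C(\Ov)\,$ is immediate from $\,\|\,f\,\| \leq \,\|\,f\,\|_{*,\,\ro} \leq \,M\,$. The substance of the proof lies in extracting a \emph{uniform} modulus of continuity out of the bound on the Dini seminorm. First I would record that $\,\om_f(r)\,$ is nondecreasing in $\,r\,$, since enlarging $\,r\,$ only enlarges the family of admissible pairs in \eqref{cinco}. Hence, for every $\,0<\,s<\,\ro\,$,
\begin{equation*}
\om_f(s)\,\log\frac{\ro}{s} =\,\om_f(s)\int_s^\ro \frac{dr}{r} \leq\,\int_s^\ro \om_f(r)\,\frac{dr}{r} \leq\,[\,f\,]_{*,\,\ro} \leq\,M\,,
\end{equation*}
which gives the crucial estimate
\begin{equation*}
\om_f(s) \leq\,\frac{M}{\log(\ro/s)}\,, \qquad 0<\,s<\,\ro\,,
\end{equation*}
a bound holding uniformly over $\,f \in \,B\,$. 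This is precisely the quantitative form of the spirit already visible in \eqref{igual}.

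Equicontinuity then follows immediately: given $\,\ep>\,0\,$, pick $\,s<\,\ro\, e^{-M/\ep}\,$, so that the right-hand side above is below $\,\ep\,$; then $\,|\,x-\,y\,| \leq\,s\,$ forces $\,|\,f(x)-\,f(y)\,| \leq\,\om_f(s)<\,\ep\,$ simultaneously for all $\,f \in \,B\,$. Arzel\`a--Ascoli now yields that $\,B\,$ is relatively compact in $\,C(\Ov)\,$, which is exactly the asserted compactness of the embedding. I expect the logarithmic estimate to be the only genuine step: it is the monotonicity of $\,\om_f\,$ combined with the divergence of $\,\int_0 dr/r\,$ that converts mere finiteness of the Dini integral into the uniform vanishing of $\,\om_f(s)\,$ as $\,s \to \,0\,$, and everything else is routine verification.
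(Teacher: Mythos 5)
Your proof is correct and follows essentially the same route as the paper's (the paper defers the actual proof to \cite{BVSTOKES}, where the same Arzel\`a--Ascoli argument is used): the monotonicity of $\,\om_f\,$ converts the bound on the Dini seminorm into the uniform estimate $\,\om_f(s) \leq\, M/\log(\ro/s)\,$, which gives equicontinuity of any $\,C_*$-bounded set. The logarithmic estimate you isolate is indeed the one genuine step, and it is the same quantitative mechanism underlying \eqref{igual}.
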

\begin{theorem}
The set $\,C^{\infty}(\Ov)\,$ is dense in $\,C_*(\Ov)$ \,.%
\label{lemum}
\end{theorem}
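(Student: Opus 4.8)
The plan is to approximate a given $f \in C_*(\Ov)$ by mollification, after a suitable extension, controlling not merely the sup norm but the full Dini seminorm $[\,\cdot\,]_*$ of the error. So fix $f \in C_*(\Ov)$ and $\ep>0$; the goal is to produce $g \in C^\infty(\Ov)$ with $\|\,f-g\,\|_* <\ep\,$.

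First I would extend $f$ to a compactly supported $\cf \in C(\R^n)$ whose modulus of continuity is controlled by that of $f$, in the scale-distorted sense $\om_{\cf}(r) \leq c\,\om_f(c\,r)$ for small $r$. Since $\Ga$ is of class $C^{2,\,\la}$, this is achieved by the usual device: flatten the boundary locally by a diffeomorphism (bi-Lipschitz on compact pieces, hence distorting moduli of continuity only by a fixed constant), extend across the flattened boundary by even reflection in the normal variable (which preserves the modulus of continuity), and glue the local pieces with a partition of unity. In particular $\cf \in C_*(\R^n)$, because $\int_0^\ro \om_{\cf}(r)\,\frac{dr}{r} \leq c\int_0^{c\ro}\om_f(s)\,\frac{ds}{s} <\infty\,.$ Next, let $\cphi_\ep$ be a standard mollifier supported in $\{|z|\leq \ep\}$ and set $g_\ep =\,\cf * \cphi_\ep\,,$ which is smooth on $\R^n$ and whose restriction to $\Ov$ lies in $C^\infty(\Ov)\,.$ Two elementary facts drive the argument: since $\cphi_\ep$ has unit mass and support of radius $\ep$, one has $\|\,\cf - g_\ep\,\| \leq \om_{\cf}(\ep)$ and, for every $r$, $\om_{g_\ep}(r) \leq \om_{\cf}(r)\,.$ Consequently the error $h_\ep \equiv \cf - g_\ep$ satisfies both $\om_{h_\ep}(r) \leq 2\,\om_{\cf}(r)$ and $\om_{h_\ep}(r) \leq 2\,\|\,h_\ep\,\| \leq 2\,\om_{\cf}(\ep)\,.$

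The decisive point is to split the Dini integral at a threshold $\de\,.$ Using the first bound on $(0,\de)$ and the second on $(\de,\ro)$,
\begin{equation*}
[\,h_\ep\,]_* \leq 2\int_0^\de \om_{\cf}(r)\,\frac{dr}{r} + 2\,\om_{\cf}(\ep)\,\log\frac{\ro}{\de}\,.
\end{equation*}
Since $\cf \in C_*(\R^n)\,,$ the first integral tends to $0$ as $\de \to 0$ by absolute continuity of the integral near the origin; so I would first fix $\de$ so small that this term is $<\ep/3\,.$ With $\de$ now frozen, I let $\ep \to 0$: as $\om_{\cf}(\ep) \to 0\,,$ both the second term above and $\|\,h_\ep\,\| \leq \om_{\cf}(\ep)$ fall below $\ep/3$ for $\ep$ small. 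Restricting everything to $\Ov$ only decreases the relevant (intrinsic) moduli of continuity, so these bounds persist for $f-g_\ep$ on $\Ov$, giving $\|\,f-g_\ep\,\|_* =\,[\,h_\ep\,]_* +\,\|\,h_\ep\,\| <\ep\,,$ as required.

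The main obstacle is the extension step: one must exhibit an operator that preserves the Dini seminorm, i.e. that controls the modulus of continuity up to a fixed multiplicative distortion of the scale. Everything afterwards is a routine mollification estimate. It is precisely the smoothness of $\Ga$ (here $C^{2,\,\la}$, much more than is needed) that makes the local flattening a genuine bi-Lipschitz change of variables, which is exactly what keeps the modulus of continuity — and hence the Dini integrability — under control.
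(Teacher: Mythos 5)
Your proposal is correct and takes essentially the same route as the paper: the paper's proof (given in \cite{BVSTOKES}) is exactly mollification after a preliminary extension beyond $\,\Ov\,$, the extension being the operator of Theorem \ref{bah}, which your boundary-flattening, even-reflection, and partition-of-unity construction realizes. One minor inaccuracy: the smooth cutoffs in the gluing contribute a Lipschitz term, so the honest bound is $\,\om_{\cf}(r) \leq c\,\om_f(c\,r) +\,c\,r\,\|\,f\,\|\,$ rather than $\,\om_{\cf}(r) \leq c\,\om_f(c\,r)\,$ (consider $f$ constant), but the extra term is Dini-integrable, so your splitting of the Dini integral and the rest of the argument go through unchanged.
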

It is worth noting that the crucial property required for the space
$\,C_*(\Ov)\,$ in the proofs of Theorems \ref{laplaces} and
\ref{teoum}, is Theorem \ref{lemum}. This theorem is proved, see
\cite{BVSTOKES}, by appealing to the well known mollification
technique. This density result \emph{up to the boundary} requires a
previous, suitable extension, of the functions outside $\,\Ov\,.$
The following result holds.
\begin{theorem}
Set $\,\Om_{\de} \equiv\,\{\,x:\, dist(x,\,\Om\,) <\,\de\,\}.\,$
There is a $\,\de >0\,$ such that the following statement holds.
There is a linear continuous map $\,T\,$ from $\,C(\Ov)\,$ to
$\,C(\Ov_{\de})\,,$ such that its restriction to $\,C_*(\Ov)\,$ is
continuous from $\,C_*(\Ov)\,$ to $\,C_*(\Ov_{\de})\,,$ and
$\,T\,f\,,$ restricted to $\,\Ov\,,$ coincides with $\,f\,.$%
\label{bah}
\end{theorem}
\section{Some results on Euler equations and elliptic problems
with  $\,C_*(\Ov)\,$ data.}\label{conhecidos}
In this section we refer back to the presentation shown in section
\ref{inttas} (the reader is supposed to have it in mind). The space
$\,C_*(\Ov)\,$ is here no more "abstract", but that defined in
section \ref{one}. Motivation was discussed in section \ref{inttas},
in particular the justification of the single statements (below, in
rigorous form).%

\vspace{0.2cm}

As already recalled, in reference \cite{BVJDE} we have considered
the problem of minimal assumptions on initial data and external
forces sufficient to obtain \emph{classical solutions} of the
$\,2-D\,$ Euler equations \eqref{alho}. According to our definition,
classical solution of this system means here that $\, \nabla\,\bv
\,,\,\partial_t\,\bv\,,\, \nabla\,\pi \in\,C(\,\overline{Q}\,)\,.$\par%
By considering $\,C(\Ov)\,$ as the curl's data space, one has the
following result, proved in \cite{BVJDE}, Theorem 1.1.
\begin{theorem}
Let a divergence free vector field $\,\bv_0\,$, tangent to the
boundary, satisfy $\,{curl}\,\bv_0 \in \,C(\Ov)\,,$ and let
$\,{curl}\,\ff \in \,L^1(\,\R^+;\, C(\Ov)\,)\,.$ Then, the problem
\eqref{alho} is uniquely solvable in the large,
\be%
\label{rotoras-cont}
{curl}\,\bv \in\,C(\,\R^+\,;\,C(\Ov)\,)\,,%
\ee%
and the estimate
\begin{equation}
\|\,{curl} \,\bv(t)\,\| \leq \,\|\,{curl} \,\bv_0\,\|+\,\int_{0}^{t}
\, \|\,{curl} \,\ff(\tau)\,\| \, d\tau %
\label{defquiv}
\end{equation}
holds. If $\,{curl}\ff=\,0\,,$ then $\|\,{curl}\bv(t)\,\|=\,\|\,{curl}\bv_0\,\|\,.$%
\label{ppum}
\end{theorem}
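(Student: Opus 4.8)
The plan is to pass to the vorticity formulation, which in two space dimensions decouples the nonlinearity into a transport equation coupled to an elliptic reconstruction. First I would take the curl of the momentum equation in \eqref{alho}; because the motion is planar the vortex-stretching term disappears, and the scalar vorticity $\,\om =\,{curl}\,\bv\,$ obeys the pure transport equation $\,\pa_t\,\om +\,(\bv\cdot\na)\,\om =\,g\,$, with $\,g =\,{curl}\,\ff\,$. The velocity is then recovered from $\,\om\,$ through the elliptic div--curl system \eqref{naosei2}: introducing the stream function via $\,-\De\,\ps =\,\om\,$ in $\,\Om\,$, $\,\ps =\,0\,$ on $\,\Ga\,$, and setting $\,\bv =\,\na^\perp\,\ps\,$, the simple connectedness of $\,\Om\,$ guarantees that this reconstruction is well defined and that $\,\bv\,$ is divergence free and tangent to $\,\Ga\,$. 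Thus the whole problem reduces to the closed system ``transport of $\,\om\,$ along the flow of $\,\bv\,$, with $\,\bv\,$ slaved to $\,\om\,$''.

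Next I would establish the two a priori estimates on which everything rests. The bound \eqref{defquiv} is immediate from the transport structure: along the characteristics $\,\dot X =\,\bv(X,t)\,$, which are volume preserving since $\,{div}\,\bv =\,0\,$, one has $\,\frac{d}{dt}\,\om(X,t) =\,g(X,t)\,$, so integrating in time and taking the supremum over the measure-preserving flow yields $\,\|\,\om(t)\,\| \leq\,\|\,\om_0\,\| +\,\int_0^t\,\|\,g(\tau)\,\|\,d\tau\,$; when $\,g =\,0\,$ the flow merely rearranges the values of $\,\om_0\,$, so $\,\|\,\om(t)\,\| =\,\|\,\om_0\,\|\,$. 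The second estimate is the regularity of the velocity: since $\,\om \in\,C(\Ov) \subset\,L^\infty(\Om)\,$, the Green's function of the Dirichlet Laplacian gives a log-Lipschitz (almost Lipschitz) bound $\,|\,\bv(x)-\,\bv(y)\,| \leq\,c\,\|\,\om\,\|\,\sigma(|x-y|)\,$, with $\,\sigma(s) =\,s\,(1+|\log s|)\,$ near $\,s =\,0\,$. This borderline modulus is exactly what the continuous-data hypothesis affords.

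Then I would construct the solution and propagate it in time. The log-Lipschitz bound satisfies the Osgood condition, hence the ODE $\,\dot X =\,\bv(X,t)\,$ has a unique flow, a time-dependent, measure-preserving homeomorphism of $\,\Ov\,$. Coupling the three maps, velocity $\mapsto$ flow $\mapsto$ transported $\,\om\,\mapsto$ velocity, I would obtain a local-in-time solution by a contraction argument in $\,C(\Ov)\,$. The a priori bound \eqref{defquiv} shows that $\,\|\,\om(t)\,\|\,$ grows at most linearly and cannot blow up in finite time; since the log-Lipschitz modulus of $\,\bv\,$ is controlled solely by $\,\|\,\om\,\|\,$, the local solution extends to all $\,t >\,0\,$, giving solvability in the large. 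Uniqueness follows from a Yudovich-type argument, in which the difference of two solutions is controlled by an Osgood inequality in an $\,L^p\,$ norm with $\,p\to\infty\,$, forcing it to vanish. Finally, the time continuity $\,{curl}\,\bv \in\,C(\R^+;\,C(\Ov))\,$ is read off from the representation $\,\om(\cdot,t) =\,\om_0\circ X_t^{-1} +\,(\text{forcing integrated along characteristics})\,$: uniform continuity of $\,\om_0\,$, the $\,L^1\,$-in-time hypothesis on $\,g\,$, and continuity of $\,t\mapsto X_t\,$ in the uniform topology transfer to continuity of $\,t\mapsto\om(t)\,$ with values in $\,C(\Ov)\,$.

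The main obstacle is precisely the low regularity of the data. With $\,\om\,$ only continuous (not H\"older), the reconstructed velocity is merely log-Lipschitz, so the classical Lipschitz ODE theory is unavailable and one must rely on the Osgood criterion both for existence and uniqueness of the characteristics. The delicate point is to control how the uniform modulus of continuity of $\,\om_0\,$ is distorted by this near-Lipschitz flow: one has to verify that, although the flow map can compress distances super-linearly, by a factor like $\,|x-y|^{e^{-ct}}\,$, the composed field $\,\om_0\circ X_t^{-1}\,$ retains a finite uniform modulus of continuity on every finite time interval, which is where the quantitative estimates must be carried out with care.
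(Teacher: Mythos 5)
Your outline follows the same backbone as the paper's own scheme (sketched in Section \ref{bbesp} and carried out in \cite{BVJDE}): pass to the scalar vorticity, reconstruct $\,\bv\,$ through the stream function solving \eqref{lapsim2} with $\,\bv=\,{Rot}\,\ps\,$, transport the vorticity along the streamlines via the representation \eqref{corlas}, and use the fact that the flow distorts distances by at most a H\"older factor with exponent $\,\de=\,e^{-c_1 B T}\,$ --- your remark about compression like $\,|x-y|^{e^{-ct}}\,$ is exactly the paper's streamline estimate \eqref{dois6}, which is the time-integrated form of your log-Lipschitz/Osgood bound. The estimate \eqref{defquiv}, and the norm equality when $\,{curl}\,\ff=0\,$ (which does require, as you implicitly use, that $\,\U(0,t,\cdot)\,$ is a homeomorphism of $\,\Ov\,$ onto itself), are obtained in both treatments by evaluating along characteristics. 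Where you genuinely diverge is the solvability-and-uniqueness mechanism: the paper sets up one global fixed point on $\,[0,T]\,$ for the composition map $\,\Pf:\,\te \rightarrow \ps \rightarrow \bv \rightarrow \U \rightarrow \z\,$ of \eqref{setass} on the bounded closed convex set $\,\K\,$ of \eqref{uiui} (invariance $\,\Pf(\K)\subset\K\,$ being precisely the a priori bound \eqref{defquiv}), while you propose local-in-time contraction plus continuation, with uniqueness by a separate Yudovich-type $\,L^p\,$/Osgood argument. The Yudovich route for uniqueness is a legitimate, well-known alternative to the paper's fixed-point uniqueness.

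There is, however, one step in your proposal that would fail as written: the \emph{contraction argument in} $\,C(\Ov)\,$. With vorticity merely continuous, the velocity is only log-Lipschitz, and the dependence of the flow $\,\U\,$ on the vorticity $\,\te\,$ is, through the Osgood inequality, only H\"older with a time-degrading exponent --- again \eqref{dois6}. Consequently the iteration map $\,\te\mapsto\z\,$ is not Lipschitz in the uniform norm on any time interval, however short, so no strict contraction is available; this loss of a full modulus is not a technical nuisance but the structural feature of the problem (it is the very reason the space $\,C_*(\Ov)\,$ was introduced for the classical-solutions question). The gap is repairable without changing your architecture: by \eqref{dois6} the image $\,\Pf(\K)\,$ consists of uniformly H\"older (in space and time) functions, hence is precompact in $\,C(\overline{Q}_T)\,$ by Ascoli--Arzel\`a, and a Schauder-type fixed point in $\,\K\,$ gives global existence directly --- no local-in-time step or continuation is needed, since the invariant set $\,\K\,$ is built from the global bound \eqref{defquiv}; uniqueness then comes from your Yudovich paragraph. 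Alternatively, one can show the iterates converge via Osgood estimates rather than a geometric contraction. Either repair makes your argument complete, and in the first form it essentially coincides with the paper's.
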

The next step was to replace $\,{curl}\,\bv\,$ by $\,\na\,\bv\,$ in
the left hand side of equation \eqref{defquiv}. The starting point
here is that the two last equations in the elliptic system
\begin{equation}
\left\{
\begin{array}{l}
{curl}\,\bv=\,\te \quad \textrm{in} \quad \Om \,,
\\
div \,\bv=\,0 \quad \textrm{in} \quad \Om \,,
\\
\bv \cdot\,\n=\,0 \quad \textrm{on} \quad \Ga \,, %
\end{array}
\right. \label{naosei2}
\end{equation}
are still included in \eqref{alho}. Furthermore, it is well known
that solutions $\,\bv\,$ of problem \eqref{naosei2} are completely
determined here by the \emph{scalar} quantity $\,\te\,$ (recall the
assumption $\,\Om\,$ simply-connected). Hence, to be allowed to
replace $\,{curl}\,\bv\,$ by $\,\na\,\bv\,$ in the left hand side of
equation \eqref{defquiv} it is sufficient to show that solutions
$\,\bv\,$ of problem \eqref{naosei2} satisfy the estimate $
\,\|\,\na\,\bv\,\| \leq\,c \,\|\,\te\,\|\,.$ Unfortunately, this is
known to be false. In other words, the data space $ \,C(\Ov)\,$ is
too wide. On the other hand, H\"older spaces are here too narrow. In
fact, in this case (see \cite{kato}, \cite{schaef}, and also
\cite{bardos}, \cite{judo}), the above device works, since solutions
of \eqref{naosei2} satisfy the estimate
\begin{equation}
\,\|\,\na\,\bv\,\|_{0,\,\la} \leq\,c \,\|\,\te\,\|_{0,\,\la}\equiv\,
c \,\|\,{curl}\,\bv\,\|_{0,\,\la}\,. \label{mates}
\end{equation}
However this estimate is unnecessarily strong in the context of our
"minimal assumptions problem". So our problem was to find a
functional space $\,C_*(\Ov)\,,$ which satisfies assumptions
\ref{ppdois} and \ref{laplaces22-cestar}, and such that the
inclusions \eqref{oito} are strict. The space $\,C_*(\Ov)\,$ defined
in section \ref{one}, satisfies these requirements. Concerning
Assumption \ref{ppdois}, we have proved in \cite{BVJDE} (see Lemma
4.4 in this reference) the following statement, may be the main
result in the above paper. It is worth noting that the presence of
quite general external forces leads to very substantial, additional
difficulties in the proof of this result.
\begin{theorem}
Let $\,C_*(\Ov)\,$  be the Banach space defined in section
\ref{one}. Assume that $\,{curl}\,\bv_0 \in \,C_*(\Ov)\,$ and
$\,{curl}\,\ff \in \,L^1(\,\R^+;\, C_*(\Ov)\,)\,$. Then, the curl of
the global solution $\,\bv\,$ of problem \eqref{alho} satisfies
$$
{curl}\,\bv \in\,C(\R^+ ; \, C_*(\,\Ov)\,)\,.
$$
Moreover
\begin{equation}
\|\,{curl}\,\bv(t)\,\|_{*} \leq \, e^{\,c_1\,B\,t\,}
\,\Big(\,3\,B+\,[\,{curl}\,\bv_0\,]_{*}+\,[\,{curl}\,\ff\,]_{
\,L^1(0,\,t;\, C_*(\Ov)\,)\,}\,\,\Big)\,,%
\label{estmeulerzeta}
\end{equation}%
where
\begin{equation}
B=\,\|\,{curl}\,\bv_0\,\| +\,\|\,{curl}\,\ff\,\|_{\,L^1(0,\,t;\,
C(\Ov)\,)\,}\,.%
\label{bbeesze}
\end{equation}%
\label{eulaszeta}
\end{theorem}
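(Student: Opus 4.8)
The plan is to reduce everything to the two-dimensional vorticity transport equation and a careful tracking of the Dini semi-norm \eqref{seis} along the flow. Writing $\z=\,{curl}\,\bv$ and $g=\,{curl}\,\ff$, in two dimensions the vortex stretching term is absent, so $\z$ solves $\pa_t\,\z +\,(\bv\cdot\na)\,\z=\,g$. From Theorem \ref{ppum} I already have a unique global solution with $\z\in\,C(\R^+;\,C(\Ov))$ and the sup-norm bound $\|\,\z(t)\,\| \leq\,B$, with $B$ as in \eqref{bbeesze}. The decisive structural input is Assumption \ref{laplaces22-cestar}: as long as $\z(t)\in\,C_*(\Ov)$, the velocity gradient is bounded, $\|\,\na\,\bv(t)\,\| \leq\,c_0\,\|\,\z(t)\,\|_*$, so $\bv(t,\cdot)$ is genuinely Lipschitz (not merely log-Lipschitz). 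This is what makes the flow estimates clean. To avoid circularity I would work with smooth approximating data $\z_0^n\to\,\z_0$ in $C_*(\Ov)$ and $g^n\to\,g$ in $L^1(\R^+;C_*(\Ov))$, legitimate by the density Theorem \ref{lemum}, establish the estimate uniformly in $n$ along the attendant classical solutions, and pass to the limit.

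Next I introduce the backward flow. For fixed $(t,z)$ let $s\mapsto\,Y_s^t(z)$ solve $\frac{d}{ds}Y_s^t(z)=\,\bv(s,Y_s^t(z))$ with $Y_t^t(z)=\,z$; because $\bv$ is tangent to $\Ga$, the flow maps $\Ov$ into $\Ov$. Integrating the transport equation along characteristics gives the representation
\begin{equation*}
\z(t,z)=\,\z_0\big(Y_0^t(z)\big)+\,\int_0^t g\big(\tau,Y_\tau^t(z)\big)\,d\tau\,.
\end{equation*}
The Lipschitz bound on $\bv$ together with Gronwall's inequality yields the distortion estimate $|\,Y_\tau^t(z)-\,Y_\tau^t(w)\,| \leq\,\kappa(t)\,|z-w|$ for $0\leq\,\tau\leq\,t$, where $\kappa(t)=\,\exp\big(\int_0^t\|\,\na\,\bv(s)\,\|\,ds\big)\geq\,1$, so $\log\kappa(t) \leq\,c_0\int_0^t\|\,\z(s)\,\|_*\,ds$.

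The heart of the argument is to convert this into a bound for $[\,\z(t)\,]_*$. Feeding the distortion estimate into the representation formula and using the notation \eqref{vinte}--\eqref{catriz}, for $|z-w|\leq\,r$ one gets
\begin{equation*}
\om_{\z(t)}(r) \leq\,\om_{\z_0}\big(\kappa(t)\,r\big)+\,\int_0^t \om_{g(\tau)}\big(\kappa(t)\,r\big)\,d\tau\,.
\end{equation*}
The essential step is then the elementary substitution $\int_0^\ro\,\om_f(\kappa r)\,\frac{dr}{r}=\,\int_0^{\kappa\ro}\om_f(s)\,\frac{ds}{s} \leq\,[\,f\,]_*+\,2\,\|\,f\,\|\,\log\kappa$, valid for $\kappa\geq\,1$ since $\om_f(s)\leq\,2\,\|\,f\,\|$; this is exactly where the Dini structure pays off, the flow distortion costing only a factor logarithmic in $\kappa$, i.e. linear in $\int_0^t\|\,\na\,\bv\,\|$. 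Applying it to both terms and recalling $\|\,\z(s)\,\| \leq\,B$ gives the closed integral inequality
\begin{equation*}
[\,\z(t)\,]_* \leq\,[\,\z_0\,]_*+\,[\,g\,]_{L^1(0,t;\,C_*(\Ov))}+\,2\,B\Big(c_0\int_0^t[\,\z(s)\,]_*\,ds+\,c_0\,B\,t\Big)\,.
\end{equation*}

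Gronwall's lemma then produces an exponential bound of the form $e^{2c_0 Bt}\big([\,\z_0\,]_*+\,[\,g\,]_{L^1}+\,2c_0 B^2 t\big)$; absorbing the polynomial factor $B^2 t$ into a slightly larger exponential constant $c_1$ and adding back $\|\,\z(t)\,\|\leq\,B$ yields \eqref{estmeulerzeta}. The continuity statement $\z\in\,C(\R^+;C_*(\Ov))$ follows from the same representation together with the continuity in time of the flow and of the data, while global existence comes from a standard continuation argument, the Gronwall bound preventing any finite-time escape from $C_*(\Ov)$. I expect the genuine obstacle to be the forcing contribution to the Dini estimate: one must control the modulus of continuity of each time slice $g(\tau)$ after transport by the backward flow from $\tau$ to $t$, keep the distortion factors consistent across the double integration in $r$ and $\tau$, and verify that these combine into the $L^1(0,t;C_*(\Ov))$ norm --- the "very substantial additional difficulties" flagged in the text for nonvanishing external forces.
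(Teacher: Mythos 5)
Your proposal is essentially correct as an a priori estimate, but it takes a genuinely different route from the paper's. The paper (following \cite{BVJDE}, as sketched in section \ref{bbesp}) proves Theorem \ref{eulaszeta} inside a fixed-point construction for the map $\,\Pf:\te\to\ps\to\bv\to\U\to\z\,$ on the ball $\,\K\,$ of $\,C(\overline{Q}_T)\,$, and never uses the Lipschitz bound $\,\|\na\bv\|\leq c_0\|\z\|_*\,$ in the vorticity estimate. Instead, the flow regularity comes from the Yudovich-type estimate \eqref{dois6}, which requires only the sup-norm bound $\,\|\z\|\leq B\,$ already secured by Theorem \ref{ppum}: the streamlines are H\"older continuous with exponent $\,\de=e^{-c_1 B t}\,$, and the composition Lemma \ref{columbus} then gives $\,[\z_0\circ\U(0,t,\cdot)]_*\leq\de^{-1}[\z_0]_*=e^{c_1 Bt}[\z_0]_*\,$ in one stroke. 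So in the paper the exponential factor in \eqref{estmeulerzeta} is the reciprocal of the H\"older exponent of the flow, there is no Gronwall inequality on the Dini semi-norm, and there is no circularity to defuse. You instead upgrade the flow to Lipschitz via Theorem \ref{rotlaplaces}, which presupposes $\,\z(t)\in C_*(\Ov)\,$, and close a Gronwall loop on $\,[\z(t)]_*\,$ using the substitution estimate (which is precisely \eqref{igual} with $\,\ro_2=\kappa\ro_1\,$); your constants do recombine into the stated form. What each approach buys: yours handles the forcing term by a routine Fubini argument, whereas in the paper's H\"older framework the force term is the genuinely delicate part (Lemma 4.2 of \cite{BVJDE}, the ``very substantial additional difficulties''); the paper's approach, in exchange, is self-contained in the sup-norm well-posedness class and delivers existence, the bound, and persistence simultaneously, without your approximation scaffolding.

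Two points in your write-up need more than a wave. First, the limit passage: besides density (Theorem \ref{lemum}) and lower semicontinuity of $\,[\cdot]_*\,$ under uniform convergence (Fatou, since $\,\om_{f}(r)\leq\liminf\om_{f_n}(r)\,$), you need continuous dependence of Yudovich solutions on the data in $\,C(\overline{Q}_T)\,$ to know $\,\z^n\to\z\,$; this is true but is a nontrivial ingredient you should state and justify. Second, and more seriously, the claim $\,\z\in C(\R^+;C_*(\Ov))\,$ does not follow ``from the same representation together with the continuity in time of the flow and of the data'': your argument yields only $\,L^\infty\,$-in-time bounds on $\,[\z(t)]_*\,$, and boundedness in $\,C_*\,$ plus continuity in $\,C(\Ov)\,$ does not give continuity in the $\,C_*\,$ norm. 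Proving continuity requires an equi-integrability or density argument (approximate $\,\z_0\,$ and the force by smooth functions, for which $\,t\mapsto\z_0\circ\U(0,t,\cdot)\,$ is easily continuous in $\,C_*\,$, then pass to the limit uniformly using the composition bounds). That this is the true obstruction is visible in the paper itself: for the larger space $\,B_*(\Ov)\,$, where density of smooth functions is open, Theorem \ref{zeulerin} obtains only $\,L^{\infty}(0,T;B_*(\Ov))\,$ rather than continuity, by an argument structurally identical to yours.
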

In \cite{BVJDE} the reader will find the notation
$\z=\,{curl}\,\bv\,,$ $\, \phi=\,{curl}\,\ff\,,$ and
$\z_0=\,{curl}\,\bv_0\,.$\par%
The above result, in the simpler case in which external forces
vanish, has been rediscovered, later on, by other authors.

\vspace{0.2cm}

Together with the above result we have proved that, under the same
assumptions on the data, also Assumption \ref{laplaces22-cestar}
holds. The following result was claimed in reference \cite{BVJDE}.
\begin{theorem}
Let $\,C_*(\Ov)\,$  be the Banach space defined in section
\ref{one}. Let $\,\te \in \,C_*(\Ov)\,,$ and $\,\bv\,$ be the
solution of problem \eqref{naosei2}. Then $\,\na\, \bv \in\,
C(\Ov)\,,$ and $\,\|\,\na\,\bv\,\| \leq \,c_0\,\|\,\te\,\|_*\,.$ In
other words, for divergence free vector fields, tangent to the
boundary, the estimate  $ \|\,\na\,\bv\,\| \leq
\,c_0\,\|\,{curl}\,\bv\,\|_*\,$ holds.%
\label{rotlaplaces}
\end{theorem}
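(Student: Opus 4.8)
The plan is to reduce the first–order elliptic system \eqref{naosei2} to a second–order scalar Dirichlet problem and then to deduce the bound on $\,\na\,\bv\,$ from a $\,C^2\,$ estimate for the associated potential, the latter being controlled precisely by the Dini seminorm $\,[\,\te\,]_*\,$. Since $\,\Om\,$ is simply connected and $\,\bv\,$ is divergence free and tangent to $\,\Ga\,$, there is a stream function $\,\ps\,$, unique up to an additive constant that I normalize by $\,\ps=\,0\,$ on $\,\Ga\,$, with $\,\bv=\,(\pa_2\ps,\,-\pa_1\ps)\,$. Then $\,{div}\,\bv=\,0\,$ holds identically, the tangency condition becomes $\,\ps|_\Ga=\,0\,$, and $\,{curl}\,\bv=\,-\De\,\ps\,$, so that $\,\ps\,$ solves
\begin{equation}
-\De\,\ps=\,\te \quad\textrm{in}\quad\Om\,,\qquad \ps=\,0\quad\textrm{on}\quad\Ga\,.
\end{equation}
Because the entries of $\,\na\,\bv\,$ are exactly the second derivatives $\,\pa_i\pa_j\,\ps\,$, it suffices to prove $\,\ps\in C^2(\Ov)\,$ together with $\,\|\,\pa_i\pa_j\ps\,\|\leq\, c_0\,\|\,\te\,\|_*\,$.

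Next I would represent $\,\ps\,$ through the Green's function of the Dirichlet Laplacian, $\,\ps(x)=\,\int_\Om G(x,\,y)\,\te(y)\,dy\,$, splitting $\,G=\,\Pf-\,h\,$ with $\,\Pf(x-\,y)=\,-\tfrac{1}{2\pi}\log|x-\,y|\,$ the fundamental solution and $\,h(x,\cdot)\,$ the harmonic corrector matching $\,\Pf\,$ on $\,\Ga\,$. Differentiating twice in $\,x\,$ produces the Calder\'on--Zygmund kernel $\,\pa_i\pa_j\Pf(x-\,y)\,$, homogeneous of degree $\,-2\,$ and of vanishing mean over circles centred at the origin. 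Exploiting this cancellation I would write, with derivatives of $\,h\,$ taken in $\,x\,$ and $\,k_{ij}\,$ a bounded coefficient coming from the circle integral,
\begin{equation}
\pa_i\pa_j\,\ps(x)=\,\int_\Om \pa_i\pa_j\Pf(x-\,y)\,\big[\,\te(y)-\,\te(x)\,\big]\,dy \,+\,k_{ij}(x)\,\te(x)\,-\,\int_\Om \pa_i\pa_j\,h(x,\,y)\,\te(y)\,dy\,.
\end{equation}
The crucial point is that the first, genuinely singular, integral is controlled by the Dini condition: writing $\,d\,$ for the diameter of $\,\Om\,$,
\begin{equation}
\Big|\,\int_\Om \pa_i\pa_j\Pf(x-\,y)\,\big[\,\te(y)-\,\te(x)\,\big]\,dy\,\Big| \,\leq\, c\int_\Om \frac{\om_\te(|x-\,y|)}{|x-\,y|^{\,2}}\,dy \,\leq\, c\int_0^{\,d} \frac{\om_\te(r)}{r}\,dr \,=\, c\,[\,\te\,]_*\,.
\end{equation}
This is exactly where Dini's condition converts the otherwise logarithmically divergent $\,|x-\,y|^{-2}\,$ singularity into the convergent integral \eqref{seis}; it is the quantitative form of the classical fact, recalled after \eqref{seis}, that a Dini–continuous density has a $\,C^2\,$ Newtonian potential.

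Finally, to upgrade boundedness to continuity \emph{up to} $\,\Ga\,$, I would invoke the density of $\,C^\infty(\Ov)\,$ in $\,C_*(\Ov)\,$ (Theorem \ref{lemum}): for smooth $\,\te\,$ the solution $\,\ps\,$ is classically in $\,C^2(\Ov)\,$, and the uniform estimate $\,\|\,\pa_i\pa_j\ps\,\|\leq\, c_0\,\|\,\te\,\|_*\,$ just obtained lets one pass to the limit, whence $\,\pa_i\pa_j\ps\in C(\Ov)\,$ for every $\,\te\in C_*(\Ov)\,$, i.e. $\,\na\,\bv\in C(\Ov)\,$, with the stated bound. I expect the main obstacle to lie not in the interior Dini estimate above, which is essentially the classical computation, but in the corrector term $\,\pa_i\pa_j h(x,\,y)\,$ as $\,x\to\Ga\,$: there the singular and regular parts of $\,G\,$ no longer decouple, and one must show that the combined kernel still obeys the size and cancellation bounds feeding the Dini integral \emph{uniformly up to} $\,\Ga\,$. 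This rests on sharp pointwise estimates for the second derivatives of the Green's function of $\,\Om\,$, using that $\,\Ga\,$ is of class $\,C^{2,\,\la}\,$; as emphasized in the introduction, the whole argument ultimately depends only on the behaviour of these Green's functions.
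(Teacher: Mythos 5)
Your overall strategy coincides with the paper's own route. Theorem \ref{rotlaplaces} is obtained there exactly by the stream-function reduction $\,\bv=\,{Rot}\,\psi\,$ of \eqref{grot}, confining the question to the scalar Dirichlet problem \eqref{lapsim} and to the estimate $\,\|\,\psi\,\|_2\leq\,c_0\,\|\,\te\,\|_*\,$ of Theorem \ref{laplaces}; and your final limiting argument is precisely the paper's: the density of $\,C^{\infty}(\Ov)\,$ in $\,C_*(\Ov)\,$ (Theorem \ref{lemum}), which the paper explicitly singles out as \emph{the} crucial property of $\,C_*(\Ov)\,$ in the proofs of Theorems \ref{laplaces} and \ref{teoum}, upgrades the a priori bound for smooth data to continuity of $\,D^2\psi\,$ up to $\,\Ga\,$ (by linearity, applied to differences $\,\te_n-\,\te_m\,$, so that $\,D^2\psi_n\,$ is Cauchy in $\,C(\Ov)\,$). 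So the reduction and the closing step are the paper's.

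The divergence --- and the genuine gap --- lies in how you produce the a priori estimate. Your decomposition $\,G=\,\Pf-\,h\,$ controls the singular part by Calder\'on--Zygmund cancellation over circles, but, as you yourself concede, it leaves the corrector $\,\pa_i\pa_j\,h(x,\,y)\,$ unestimated as $\,x\to\Ga\,$, where it is as singular as $\,\pa_i\pa_j\Pf\,$ and enjoys no mean-value cancellation; your proposal therefore proves the bound $\,\|\,D^2\psi\,\|\leq\,c_0\,\|\,\te\,\|_*\,$ only in the interior, and the theorem is a statement up to the boundary. The paper's method avoids this obstacle entirely: it never splits off the fundamental solution and never differentiates twice under the integral sign. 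As in the proof of Theorem \ref{prop2} (and of Theorem \ref{teoum} in \cite{BVSTOKES}, which the remark following that proof says applies verbatim to \eqref{lapsim}), one subtracts from $\,\psi\,$ the smooth solution of the \emph{same} boundary value problem with the data frozen at the value $\,\te(x_0)\,$ (the ``tangent problem''), so that the difference $\,w\,$ is represented by the \emph{full} Green's function integrated against $\,\te(y)-\,\te(x_0)\,$; one then estimates $\,\na w(x)-\,\na w(x_0)\,$ by splitting $\,\Om\,$ into $\,\Om(x_0;\,2\ro)\,$ and its complement, using only the pointwise bounds \eqref{quatro} for $\,G\,$ and its derivatives --- which are known \emph{uniformly up to the boundary} for $\,C^{2,\la}\,$ domains --- together with the mean-value theorem on the far region. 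Every integral is then absolutely convergent, no principal value and no boundary corrector ever appear, and Dini's condition enters exactly as in your near-region computation. To rescue your version you would have to establish size-plus-cancellation estimates for $\,\pa^2_x h\,$ near the boundary of a general $\,C^{2,\la}\,$ domain; that is exactly the difficulty the frozen-data subtraction is designed to bypass, which is why the paper stresses that its proofs depend \emph{only} on the Green's function estimates \eqref{quatro}.
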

In \cite{BVJDE}, following a classical device, the study of system
\eqref{naosei2} was confined to a similar regularity problem for
equation \eqref{lapsim}. For convenience, we postpone this
point to section \ref{doisemeio}.\par%
Theorems \ref{eulaszeta} and \ref{rotlaplaces} yield the following
statement (Theorem 1.4 in  \cite{BVJDE}).
\begin{theorem}
Let $\,C_*(\Ov)\,$  be the Banach space defined in section
\ref{one}. Further, let $\,{curl}\,\bv_0  \in \,C_*(\Ov)\,$ and
$\,{curl}\,\ff \in \,L^1(\,\R^+;\, C_*(\Ov)\,)\,.$ Then, the global
solution $\,\bv\,$ to problem \eqref{alho} is continuous in time
with values in $\,\C^1(\Ov)\,,$
\begin{equation}
\bv \in\,C(\,\R^+\,;\,\C^1(\Ov)\,)\,.%
\label{zeuler}
\end{equation}%
Furthermore, the estimate
\begin{equation}
\|\,\bv(t)\,\|_{\C^1(\Ov)} \leq\,c\,e^{\,c_1\,B\,t\,}
\,\{\,\|\,{curl}\,\bv_0\,\|_{C_*(\Ov)}+\,\|\,{curl}\,\ff\,\|_{
\,L^1(0,\,t;\, C_*(\Ov)\,)\,}\,\}%
\label{estmeuler}
\end{equation}%
holds for all $\,t\in \R^+\,,$ where $B$ is given by \eqref{bbeesze}.\par%
Moreover, $\,\partial_t\,\bv\,$ and $\,\nabla\,\pi\,$ are continuous
in $\,\overline{Q}\,$ if both terms $\,\ff_0\,$ and $\,\na\,F\,,$ in
the canonical Helmholtz decomposition $\,\ff=\,\ff_0 +\,\na\,F\,$
satisfy, separately, this same continuity property. So, all
derivatives that appear in equations \eqref{alho} are continuous
in $\,\overline{Q}\,$ (classical solution).%
\label{eulas}
\end{theorem}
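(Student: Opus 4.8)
The plan is to derive the first two conclusions, \eqref{zeuler} and \eqref{estmeuler}, by composing Theorems \ref{eulaszeta} and \ref{rotlaplaces} pointwise in time, and then to treat the continuity of $\,\pa_t\,\bv\,$ and $\,\na\,\pi\,$ separately. First I would invoke Theorem \ref{eulaszeta}: under the stated hypotheses one has $\,{curl}\,\bv\in C(\R^+;C_*(\Ov))\,$ together with the bound \eqref{estmeulerzeta} for $\,\|\,{curl}\,\bv(t)\,\|_*\,$. Next, at each fixed $\,t\,$ the field $\,\bv(t)\,$ is divergence free, tangent to $\,\Ga\,$, and has $\,{curl}\,\bv(t)=:\te\in C_*(\Ov)\,$; hence it is exactly the solution of \eqref{naosei2} for this $\,\te\,$, and Theorem \ref{rotlaplaces} gives $\,\na\,\bv(t)\in\C(\Ov)\,$ with $\,\|\,\na\,\bv(t)\,\|\le c_0\,\|\,{curl}\,\bv(t)\,\|_*\,$. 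Adding the standard elliptic bound $\,\|\,\bv(t)\,\|\le c\,\|\,{curl}\,\bv(t)\,\|\,$ for \eqref{naosei2} yields $\,\|\,\bv(t)\,\|_{\C^1(\Ov)}\le c\,\|\,{curl}\,\bv(t)\,\|_*\,$, and inserting \eqref{estmeulerzeta}, whose right-hand side is controlled by $\,\|\,{curl}\,\bv_0\,\|_{C_*(\Ov)}\,$ and $\,\|\,{curl}\,\ff\,\|_{L^1(0,t;C_*(\Ov))}\,$ up to the factor $\,e^{\,c_1\,B\,t}\,$, produces \eqref{estmeuler}.

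For the time--continuity \eqref{zeuler} I would exploit the linearity of the solution operator $\,\te\mapsto\bv\,$ of \eqref{naosei2}. For $\,s,t\in\R^+\,$ the difference $\,\bv(t)-\bv(s)\,$ solves \eqref{naosei2} with datum $\,{curl}\,\bv(t)-{curl}\,\bv(s)\,$, so Theorem \ref{rotlaplaces} gives $\,\|\,\na\,\bv(t)-\na\,\bv(s)\,\|\le c_0\,\|\,{curl}\,\bv(t)-{curl}\,\bv(s)\,\|_*\,$, while the corresponding $\,C^0\,$ bound controls $\,\|\,\bv(t)-\bv(s)\,\|\,$. Since $\,t\mapsto{curl}\,\bv(t)\,$ is continuous with values in $\,C_*(\Ov)\,$ by Theorem \ref{eulaszeta}, both right-hand sides tend to $\,0\,$ as $\,s\to t\,$, whence $\,\bv\in C(\R^+;\C^1(\Ov))\,$.

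For the final statement I would read the momentum equation as $\,\pa_t\,\bv+\na\,\pi=\ff-(\bv\cdot\na)\,\bv\,$. The nonlinear term lies in $\,\C(\overline{Q})\,$ because $\,\bv\in C(\R^+;\C^1(\Ov))\,$ already gives $\,\bv,\na\,\bv\in\C(\overline{Q})\,$. Differentiating $\,div\,\bv=0\,$ and $\,\bv\cdot\n=0\,$ in $\,t\,$ shows that $\,\pa_t\,\bv\,$ is solenoidal and tangent to $\,\Ga\,$, while $\,\na\,\pi\,$ is a gradient; by uniqueness of the Helmholtz decomposition, $\,\pa_t\,\bv\,$ and $\,\na\,\pi\,$ are then identified with the solenoidal and gradient parts of $\,\ff-(\bv\cdot\na)\,\bv\,$. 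Splitting the data through $\,\ff=\ff_0+\na\,F\,$, the continuity of $\,\pa_t\,\bv\,$ and $\,\na\,\pi\,$ reduces to that of the two Helmholtz components of the nonlinear term; here I would use $\,div\big((\bv\cdot\na)\,\bv\big)=(\pa_i v_j)(\pa_j v_i)\,$ together with the identity $\,(\bv\cdot\na)\,\bv=\na(\tfrac{1}{2}|\bv|^2)+({curl}\,\bv)\,\bv^\perp\,$ (with $\,\bv^\perp=(-v_2,v_1)\,$), which moves an explicit continuous gradient into $\,\na\,\pi\,$ and leaves a remainder whose solenoidal and gradient parts are governed, via the scalar problem \eqref{lapsim}, by the same $\,C_*(\Ov)\to\C(\Ov)\,$ elliptic machinery underlying Theorem \ref{rotlaplaces}.

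I expect the genuine obstacle to be precisely this last reduction. The Leray (Helmholtz) projection does \emph{not} preserve mere continuity: this is the familiar borderline failure that already forced the passage from $\,C(\Ov)\,$ to $\,C_*(\Ov)\,$ in Assumption \ref{laplaces22-cestar}, and it is exactly why the hypothesis is stated conditionally, demanding that $\,\ff_0\,$ and $\,\na\,F\,$ be \emph{separately} continuous rather than merely that $\,\ff\,$ be continuous. For the nonlinear contribution the delicate point is that $\,(\pa_i v_j)(\pa_j v_i)\,$ is built from the merely continuous field $\,\na\,\bv\,$, so that the associated pressure Poisson problem has only continuous data; one must therefore exploit the special Jacobian structure of this quadratic expression to recover a Dini-type modulus and legitimately apply the good $\,C_*\to\C\,$ estimate in place of the false $\,C\to\C\,$ one. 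Establishing this Dini regularity of the nonlinear term is the crux; by contrast, the estimates \eqref{zeuler} and \eqref{estmeuler} are an essentially mechanical composition of the two cited theorems.
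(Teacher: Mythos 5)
Your derivation of \eqref{zeuler} and \eqref{estmeuler} is exactly the paper's route: the paper obtains Theorem \ref{eulas} precisely as the composition of Theorem \ref{eulaszeta} (curl regularity and estimate \eqref{estmeulerzeta}) with Theorem \ref{rotlaplaces} applied at each fixed time to the system \eqref{naosei2}, deferring details to \cite{BVJDE}. Your absorption of the term $\,3\,B\,$ into $\,c\,\big(\,\|\,{curl}\,\bv_0\,\|_{C_*(\Ov)}+\,\|\,{curl}\,\ff\,\|_{L^1(0,\,t;\,C_*(\Ov))}\,\big)\,$ and your linearity argument for the time continuity (the difference $\,\bv(t)-\,\bv(s)\,$ solving \eqref{naosei2} with datum $\,{curl}\,\bv(t)-\,{curl}\,\bv(s)\,$) are correct and are the intended mechanism.

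On the final clause, which the paper does not prove here (it is dealt with in \cite{BVJDE}), your sketch is on the right track but you misdiagnose the crux. There is no need to extract a Dini-type modulus from the Jacobian $\,(\pa_i v_j)(\pa_j v_i)\,$, and no special cancellation in that quadratic form is required: your own identity $\,(\bv\cdot\na)\,\bv=\,\na\big(\tfrac12\,|\bv|^2\big)+\,\z\,\bv^{\perp}\,,$ with $\,\z=\,{curl}\,\bv\,,$ already finishes the reduction. Indeed the first term is an explicit continuous gradient, absorbed into $\,\na\,\pi\,$, while for the second term Theorem \ref{eulaszeta} gives $\,\z(t)\in\,C_*(\Ov)\,$ --- not merely $\,C(\Ov)\,$ --- and $\,C_*(\Ov)\,$ is stable under multiplication by Lipschitz fields: since $\,\bv(t)\in\,\C^1(\Ov)\,,$ one has
\begin{equation}
\om_{\z\,\bv^{\perp}}(r)\,\leq\,\|\,\bv\,\|\,\om_{\z}(r)+\,c\,\|\,\z\,\|\,\|\,\na\,\bv\,\|\,r\,,
\label{prodest}
\end{equation}
which is integrable against $\,dr/r\,.$ Hence $\,\z\,\bv^{\perp}\in\,\C_*(\Ov)\,$ outright, uniformly on compact time intervals, and the continuity of its Helmholtz components follows from the same Green's-function machinery behind Theorems \ref{laplaces} and \ref{rotlaplaces}, applied to the Neumann problem determining the gradient part; as the paper stresses, these proofs depend only on kernel estimates of the type \eqref{quatro}. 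So the obstacle you flag --- a pressure Poisson problem with merely continuous data --- never arises: the $\,C_*\,$ regularity of the vorticity is exactly what the framework feeds you at this point. The only genuinely conditional ingredient is the one already built into the hypothesis, namely that $\,\ff_0\,$ and $\,\na\,F\,$ be separately continuous, since, as you correctly observe, the Leray projection does not preserve $\,C(\Ov)\,.$
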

If $\,\Om\,$ is not simply connected the results still apply. See
the appendix 1 in \cite{BVJDE}.\par%

\vspace{0.2cm}

It is worth noting that the proof of theorem \ref{eulaszeta}
strongly appeals to the following simple, but crucial, estimate
state in Lemma 4.1 in reference \cite{BVJDE}, to which the reader is
refereed for the proof.
\begin{lemma}
Let $\,\ba \in\, \C_*(\,\Ov)\,$ and $\,\U \in \,
\C^{0,\,\de}(\,\Ov;\,\Ov)\,,$ $\,0<\de\leq\,1\,.$ Then $\,\ba \circ
\,\U \in \, C_*(\,\Ov)\,$; moreover
\begin{equation}
[\,\ba \circ\,\U]_* \leq\,\frac{1}{\de}\,[\,\ba]_*\,.%
\label{aquela}
\end{equation}
\label{columbus}
\end{lemma}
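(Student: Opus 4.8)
The plan is to bound the modulus of continuity of the composite $\ba\circ\U$ by that of $\ba$, evaluated at the contracted scale $r^\de$, and then to feed this into the Dini integral \eqref{seis}, where the exponent $\de$ reappears as the factor $1/\de$ through a logarithmic change of variables.

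First I would record the elementary scale estimate. Writing $L$ for the H\"older seminorm of $\U$, so that $|\U(x)-\U(y)|\le L\,|x-y|^\de$ on $\Ov$, and using that $\om_\ba$ (defined in \eqref{cinco}) is nondecreasing, any pair $x,y$ with $|x-y|\le r$ satisfies
\[
|\ba(\U(x))-\ba(\U(y))|\le\om_\ba\big(|\U(x)-\U(y)|\big)\le\om_\ba\big(L\,r^\de\big).
\]
Taking the supremum over all such pairs gives the key inequality $\om_{\ba\circ\U}(r)\le\om_\ba(L\,r^\de)$ for every $r>0$. Continuity of $\ba\circ\U$, hence membership in $\C(\Ov)$, is automatic, being a composition of continuous maps on the compact set $\Ov$; so only finiteness of the seminorm remains.

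Next I would substitute this into the definition of $[\,\cdot\,]_*$ and change variables. Setting $s=L\,r^\de$, one has $\dfrac{dr}{r}=\dfrac1\de\,\dfrac{ds}{s}$, whence
\[
[\,\ba\circ\U\,]_{*,\ro}=\int_0^\ro\om_{\ba\circ\U}(r)\,\frac{dr}{r}
\le\int_0^\ro\om_\ba(L\,r^\de)\,\frac{dr}{r}
=\frac1\de\int_0^{L\ro^\de}\om_\ba(s)\,\frac{ds}{s}.
\]
Since $\om_\ba\ge0$, the last integral is monotone in its upper endpoint, so as soon as $L\ro^\de\le\ro$ it is dominated by $\tfrac1\de\,[\,\ba\,]_{*,\ro}$, which is exactly \eqref{aquela}; finiteness of the right-hand side then places $\ba\circ\U$ in $\C_*(\Ov)$.

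The one point needing care is the reconciliation of the integration endpoints, that is the inequality $L\ro^\de\le\ro$. Under the natural normalization $L\le1$ this holds for every radius $\ro\ge1$, and since the $*$-seminorm is essentially independent of $\ro$ by \eqref{igual}, its defining radius may be fixed at such a value; a general constant $L$ is absorbed the same way, landing on $\tfrac1\de[\,\ba\,]_{*,L\ro^\de}$ and then comparing through \eqref{igual} at the cost of an additive multiple of $\|\ba\|$. This endpoint bookkeeping is the only obstacle: the substantive content---the contraction of scales by $\U$ and the resulting factor $1/\de$---is entirely contained in the two displays above. It is precisely here that the hypothesis $\U\colon\Ov\to\Ov$ enters, since it guarantees that every argument fed to $\om_\ba$ is a genuine separation of two points of $\Ov$, so that the bound $|\ba(\U(x))-\ba(\U(y))|\le\om_\ba(|\U(x)-\U(y)|)$ is legitimate.
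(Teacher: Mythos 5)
Your proof is correct and takes essentially the same route as the paper: the paper defers this lemma's proof to \cite{BVJDE}, but it replays exactly your argument in the proof of Theorem \ref{zeulerin} --- the composition modulus bound (compare your $\om_{\ba\circ\U}(r)\le\om_{\ba}(L\,r^{\de})$ with \eqref{zuava}), the change of variables $s=L\,r^{\de}$ producing the factor $\tfrac{1}{\de}$, and the adjustment of the integration endpoint via \eqref{igual}. Your closing caveat about reconciling $L\ro^{\de}$ with $\ro$ is well taken and matches the paper's own handling, where the H\"older constant $K$ of the streamlines is absorbed through \eqref{igual} at the cost of a multiple of $\|\,\ba\,\|$ (hidden in $C_T$), so the clean form \eqref{aquela} should indeed be read under that normalization.
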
%
Note that the need for the above property narrows the possible
choice of the candidate spaces $\,\C_*(\,\Ov)\,$.\par%
The specific role of the $\,\U's\,$ in equation \eqref{aquela} is
that of stream-lines generated by velocity fields. In an old, hand
written version \cite{BVUN} (still conserved) the above lemma is
written in terms of metric spaces. However, at that time, it seemed
to us a little "out of place" to present a so simple result in an
abstract form.\par%
Concerning the $2-D$ Euler equations we also refer the reader to
\cite{koch}, where the treatment of this particular problem is not
very dissimilar to that followed in reference \cite{BVJDE}.
\section{Stokes, and other elliptic problems, in $\,C_*(\Ov)\,$. }\label{doisemeio}%
We start by showing how to confine the minimal regularity problem
for the elliptic system \eqref{naosei2}, treated in Theorem
\ref{rotlaplaces}, to a similar, simpler, regularity problem for
equation \eqref{lapsim}. A classical argument shows that
the solution $\,\bv\,$ of the linear elliptic system \eqref{naosei2} can be obtained by setting%
\begin{equation}
\bv=\,{Rot}\,\psi\,,%
\label{grot}
\end{equation}
where the scalar field $\,\psi\,$ solves the problem
\begin{equation}
\left\{
\begin{array}{l}
-\,\Delta\, \psi=\,\te \quad \textrm{in} \quad \Om \,,\\
\psi=\,0 \quad \textrm{on} \quad \Ga \,. %
\end{array}
\right.%
\label{lapsim}
\end{equation}
We appeal here to a typical approach in studying planar motions. The
scalar $\,\psi\,$ is a simpler representation of
$\,\psi\,\boldsymbol{e}_3\,,$ a vector field normal to the plane of
motion. Furthermore, $\,{Rot}\,\psi\,$ is defined by setting
$\,{Rot}\,\psi\equiv\,{curl}\,(\psi \,\boldsymbol{e}_3)\,,$ a vector
field lying in the plane of motion. It may be obtained by a
$\,\frac{\pi}{2}\,$ counter-clockwise rotation of
$\,\nabla \,\psi\,$.\par%
In reference \cite{BVJDE} we have stated the following result.
\begin{theorem}
Let $\,\te \in \,C_*(\Ov)\,$ and let $\,\psi\,$ be the solution to
problem \eqref{lapsim}. Then $\,\psi \in\, C^2(\Ov)\,,$ moreover,
$\,\|\,\psi\,\|_2 \leq \,c_0\,\|\,\te\,\|_*\,.$%
\label{laplaces}
\end{theorem}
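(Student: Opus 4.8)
The plan is to argue through the Dirichlet Green's function $G(x,y)$ of $-\De$ on $\Om$. Writing the fundamental solution of $-\De$ as $\Phi$, one has $G(x,y)=\Phi(x-y)-h(x,y)$, where for each fixed $y$ the corrector $h(\cdot,y)$ is harmonic in $\Om$ and equals $\Phi(\cdot-y)$ on $\Ga$, so that $G(\cdot,y)$ vanishes on $\Ga$. The solution of \eqref{lapsim} is then
\[
\ps(x)=\int_\Om G(x,y)\,\te(y)\,dy .
\]
The whole proof rests on a single pointwise estimate for the kernel, namely
\[
|\,\pa_i\pa_j G(x,y)\,|\le c\,|x-y|^{-n},
\]
valid uniformly for $x,y\in\Ov$, $x\neq y$. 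This is where the geometry of $\Om$ and the $C^{2,\,\la}$ regularity of $\Ga$ enter, and I expect it to be the main obstacle: the bare second derivatives of $\Phi(x-y)$ and of the corrector each blow up, like $|x-y|^{-n}$ and like $(\text{dist to the reflected singularity})^{-n}$ near $\Ga$, so one must exploit the cancellation between them. For the half-space this follows from the method of images, since the distance from $x$ to the image of $y$ dominates $|x-y|$; for a general $C^{2,\,\la}$ domain one localizes, flattens the boundary, and applies Schauder estimates to the corrector.

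Granting this bound, the first derivatives are obtained by differentiating under the integral sign, which is legitimate because $|\pa_i G(x,y)|\le c\,|x-y|^{1-n}$ is integrable on $\Om$. For the second derivatives the kernel is only critically singular, so I would introduce the torsion function $w(x)=\int_\Om G(x,y)\,dy$, which solves $-\De w=1$ in $\Om$, $w=0$ on $\Ga$, and hence lies in $C^2(\Ov)$. The identity to be justified is
\[
\pa_i\pa_j\ps(x)=\int_\Om \pa_i\pa_j G(x,y)\,[\,\te(y)-\te(x)\,]\,dy+\te(x)\,\pa_i\pa_j w(x),
\]
which I would prove by the standard device of excising a ball $I(x;\ep)$, differentiating the resulting regular integral, integrating by parts, and letting $\ep\to 0$; the surface term produced by the integration by parts assembles in the limit into $\te(x)\,\pa_i\pa_j w(x)$.

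The estimate then follows immediately from Dini's condition. Using $|\te(y)-\te(x)|\le\om_\te(|x-y|)$ together with the kernel bound and passing to polar coordinates around $x$,
\[
\Big|\int_\Om \pa_i\pa_j G(x,y)\,[\,\te(y)-\te(x)\,]\,dy\Big|\le c\int_0^{\ro}\om_\te(r)\,\frac{dr}{r}=c\,[\,\te\,]_*,
\]
which is precisely where the finiteness of the Dini integral \eqref{seis} is used. Since $\pa_i\pa_j w$ is bounded, one obtains $\|\,\pa_i\pa_j\ps\,\|\le c_0\,\|\,\te\,\|_*$, and together with the trivial bounds on $\ps$ and $\na\ps$ this yields $\|\,\ps\,\|_2\le c_0\,\|\,\te\,\|_*$.

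Finally, to upgrade boundedness to membership in $C^2(\Ov)$ I would prove continuity of $\pa_i\pa_j\ps$ up to $\Ga$. Fixing $x_0\in\Ov$ and $\de>0$, I split the singular integral into its part over $\Om(x_0;\de)$ and the remainder. The near part is bounded by $c\int_0^\de\om_\te(r)\,\frac{dr}{r}$, which tends to $0$ as $\de\to 0$ because $[\,\te\,]_*<\infty$; the far part has a kernel continuous in $x$ and is therefore continuous. This uniform smallness of the near part gives equicontinuity, and hence the desired continuity of the second derivatives on all of $\Ov$.
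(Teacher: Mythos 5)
Your argument is correct in outline, but it takes a genuinely different route from the one the paper relies on. As the note indicates (the proof is carried out in \cite{BVSTOKES} for the Stokes analogue, Theorem \ref{teoum}, ``along the same lines''), the paper's strategy is: establish the a priori bound by a Green's-function splitting of the kind displayed here in the proof of Theorem \ref{prop2}, and then obtain \emph{continuity} of the second derivatives up to the boundary by approximation --- smooth data give $C^2$ solutions, the estimate $\|\,\ps\,\|_2\le c_0\,\|\,\te\,\|_*$ applied to differences makes the approximating solutions Cauchy in $C^2(\Ov)$, and the density of $C^{\infty}(\Ov)$ in $C_*(\Ov)$ (Theorem \ref{lemum}) closes the argument; the note explicitly singles out this density theorem as \emph{the} crucial property behind Theorems \ref{laplaces} and \ref{teoum}. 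You instead prove continuity directly, in the Gilbarg--Trudinger manner: difference kernel plus torsion function, with the near part dominated by the tail $c\int_0^\de\om_\te(r)\,\frac{dr}{r}$, which is small \emph{uniformly in} $x$ precisely because the $C_*$ modulus is uniform, cf.\ \eqref{catriz}. Both routes rest on the same kernel estimates \eqref{quatro}, which the paper, like you, imports from the literature (Solonnikov, Ladyzhenskaya, etc.), so your sketched derivation of them is not a gap relative to the paper. What each approach buys: yours dispenses with the density theorem altogether; the paper's isolates density as the single obstruction to continuity, which explains transparently why for data in $B_*(\Ov)$ or $D_*(\Ov)$ (section \ref{two}) only boundedness is obtained --- there density is open, and correspondingly your near-part estimate breaks down for those spaces, since the supremum over $x$ sits outside the Dini integral and the smallness is no longer uniform. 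One local repair to your write-up: for a Green's function $\na_y G\ne -\,\na_x G$, so the Newtonian-potential integration by parts does not transfer verbatim to produce the term $\te(x)\,\pa_i\pa_j w(x)$; but the identity you state is correct and is obtained more safely by the excision/difference-quotient device alone, adding and subtracting $\te(x_0)\,w$ and using $w\in C^{2,\la}(\Ov)$ from Schauder theory.
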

Theorem \ref{rotlaplaces} follows immediately from this result, by
appealing to the explicit expression \eqref{grot} of the solution
$\,\bv\,$ of problem \eqref{naosei2}.\par%
Theorem \ref{laplaces} was stated in \cite{BVJDE} as Theorem 4.5.
Actually, in this last reference the result was claimed for more
general linear elliptic boundary value problems. However the proof
remained unpublished. Recently, in reference \cite{BVSTOKES}, we
followed the same lines to obtain a corresponding result for the
Stokes system, see Theorem \ref{teoum} bellow (in section
\ref{thres} we show a partial extension of this theorem to larger functional spaces).\par%
Consider the Stokes system (see, for instance, \cite{galdi},
\cite{ladyz}, \cite{temam})
\begin{equation}
\left\{
\begin{array}{l}
-\,\De\,\bu+\,\na\,p=\,\ff \quad \textrm{in} \quad \Om \,,\\
\na \cdot\,\bu=\,0  \quad \textrm{in} \quad \Om \,,\\
\bu=\,0 \quad \textrm{on} \quad \Ga \,.%
\end{array}
\right.%
\label{dois}
\end{equation}
If $\,\ff \in\, \bC(\Ov)\,,$ this problem has a unique solution
$\,(\,\bu,\,p\,) \in \bC^1(\Ov)\times\ C(\Ov)\,,$ where $\,p\,$ is
defined up to a constant. The solution is given by
\begin{equation}
u_i(x)=\,\int_{\Om} \,G_{i\,j}(x,\,y) \,f_j(y) \,dy\,, \quad
p(x)=\,\int_{\Om} \,g_j(x,\,y) \,f_j(y) \,dy\,,%
\label{solias}
\end{equation}
where $\,\bG\,$ and $\,\bg\,$ are respectively the Green's tensor
and vector associated with the above boundary value problem.
Furthermore, the following estimates hold.
\begin{equation}
\begin{array}{l}
|\,G_{i\,j}(x,\,y)\,| \leq\,\frac{C}{|x-\,y|}\,,\\
\\
\Big|\, \frac{\pa\,G_{i\,j}(x,\,y)}{\pa\,x_k}\,\Big|
\leq\,\frac{C}{|x-\,y|^2}\,, \quad |\,g_j(x,\,y)\,|
\leq\,\frac{C}{|x-\,y|^2}\,,\\
\\
\Big|\, \frac{\pa^2\,G_{i\,j}(x,\,y)}{\pa\,x_k\,\pa\,x_l}\,\Big|
\leq\,\frac{C}{|x-\,y|^3}\,, \quad |\,
\frac{\pa\,g_j(x,\,y)}{\pa\,x_k}\,| \leq\,\frac{C}{|x-\,y|^3}\,,
\end{array}
\label{quatro}
\end{equation}
where the positive constant $C$ depends only on $\,\Om\,.$ For an
overview on the classical theory of hydrodynamical potentials, and
the construction of the Green functions $\,\bG\,$ and $\,\bg\,,$ we
refer to chapter 3 of the classical treatise \cite{ladyz}. The
estimates \eqref{quatro} are contained in equations (46) and (47) in
this last reference. They may also be found in \cite{Sol60}; see
also \cite{cattabriga} and \cite{valli}. The estimates
\eqref{quatro} are a particular case of a set of much more general
results, due to many authors. See, for instance,
\cite{a-d-n}, \cite{Sol60}, \cite{so2}, \cite{Sol70} and \cite{Sol71}.\par%
It is well known, see \cite{cattabriga}, that for every $\,\ff \in
\,\bC^{0,\,\la}(\Ov)\,$ the solution $(\bv,\,p)$ to the Stokes
system \eqref{dois} belongs to
$\,\bC^{2,\,\la}(\Ov)\times\,C^{1,\,\la}(\Ov)\,$. Hence, as above,
H\"older spaces look too strong as data spaces for getting classical
solutions. On the other hand, as above, it is well known that $\,\ff
\in \,\bC(\Ov)\,$ does not guarantee classical solutions.
In reference \cite{BVSTOKES}, by following \cite{BVUN}, we proved
the following result.
\begin{theorem}
For every $\,\ff \in \,\bC_*(\Ov)\,$ the solution $(\bu,\,p)$ to the
Stokes system \eqref{dois} belongs to
$\,\bC^2(\Ov)\times\,C^1(\Ov)\,$. Moreover, there is a constant
$\,c_0\,$, depending only on $\,\Om\,,$ such that the estimate
\begin{equation}
\|\,\bu\,\|_2 +\,\|\na \,p\,\| \leq \,c_0\,\|\,\ff\,\|_*\,, \quad
\forall \, \ff\in\,\bC_*(\Ov)\,,%
\label{tres}
\end{equation}
holds.%
\label{teoum}
\end{theorem}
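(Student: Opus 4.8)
The plan is to read everything off the explicit representation \eqref{solias} together with the kernel bounds \eqref{quatro}, reducing the whole statement to a single Dini-type integral. I would first dispose of the harmless terms. The kernels governing $\,\bu\,$ and $\,\na\,\bu\,$ decay like $\,|x-y|^{-1}\,$ and $\,|x-y|^{-2}\,$, both integrable over the bounded set $\,\Om \subset \R^3\,$; differentiation under the integral sign is then legitimate and gives $\,\|\,\bu\,\| + \|\,\na\,\bu\,\| \leq c\,\|\,\ff\,\| \leq c\,\|\,\ff\,\|_*\,$ directly. Likewise $\,p\,$ itself is controlled by $\,\|\,\ff\,\|\,$ through the integrable kernel $\,|x-y|^{-2}\,$ for $\,g_j\,$. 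The entire difficulty is therefore concentrated in the second derivatives $\,\pa_{x_k x_l} u_i\,$ and in $\,\na\,p\,$, whose kernels behave like $\,|x-y|^{-3}\,$ and are \emph{not} integrable in three dimensions.

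For these singular terms I would use the classical subtraction device, working first with a smooth $\,\ff\,$. Differentiating \eqref{solias} and regularizing the singular kernel in the standard way (as for Newtonian potentials, integrating the principal part by parts) one is led to a representation of the form
\[
\pa_{x_k x_l} u_i(x) = \int_\Om \frac{\pa^2 G_{ij}(x,y)}{\pa x_k\,\pa x_l}\,\big(\,f_j(y) - f_j(x)\,\big)\,dy + b_{ijkl}(x)\,f_j(x),
\]
where the term $\,b_{ijkl}(x)\,$, produced by the integration by parts, is bounded on $\,\Ov\,$ with a bound depending only on $\,\Om\,$. The subtraction $\,f_j(y) - f_j(x)\,$ is what tames the singularity: using $\,|f_j(y) - f_j(x)| \leq \om_{f_j}(|x-y|)\,$ together with the bound $\,|\pa^2_x G_{ij}| \leq C\,|x-y|^{-3}\,$ from \eqref{quatro}, and passing to spherical coordinates ($\,dy \approx r^2\,dr\,$ in $\,\R^3\,$), gives
\[
\Big|\int_\Om \frac{\pa^2 G_{ij}(x,y)}{\pa x_k\,\pa x_l}\,\big(\,f_j(y) - f_j(x)\,\big)\,dy\Big| \leq C\int_\Om \frac{\om_{f_j}(|x-y|)}{|x-y|^3}\,dy \leq c\int_0^\ro \om_{f_j}(r)\,\frac{dr}{r} = c\,[\,f_j\,]_*.
\]
Since $\,|b_{ijkl}(x)\,f_j(x)| \leq c\,\|\,\ff\,\|\,$, this yields exactly the bound $\,\|\,\bu\,\|_2 \leq c_0\,\|\,\ff\,\|_*\,$; the role of Dini's condition is precisely to render the last integral finite. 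The gradient $\,\na\,p\,$ is treated identically, replacing $\,\pa^2_x G_{ij}\,$ by $\,\pa_x g_j\,$ and invoking the matching bound in \eqref{quatro}.

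The delicate point, which I expect to be the main obstacle, is upgrading this boundedness to genuine \emph{continuity} of $\,\pa_{x_k x_l} u_i\,$ and $\,\na\,p\,$ up to the boundary, since the estimate above only bounds these quantities. I would establish continuity of the integral term $\,F(x) := \int_\Om \frac{\pa^2 G_{ij}}{\pa x_k\,\pa x_l}\,(f_j(y) - f_j(x))\,dy\,$ by splitting $\,\Om\,$ into the near part $\,\Om(x;\,\ro)\,$ and the far part $\,\Om_c(x;\,\ro)\,$. On $\,\Om(x;\,\ro)\,$ the same Dini estimate bounds the contribution by $\,c\,[\,f_j\,]_{*,\,\ro}\,$, which tends to $\,0\,$ as $\,\ro \to 0\,$ \emph{uniformly in} $\,x\,$, by the very definition \eqref{seis} of the Dini integral; on $\,\Om_c(x;\,\ro)\,$ the kernel is non-singular, so that piece depends continuously on $\,x\,$. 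A standard $\,\ep/3\,$ argument then gives continuity of $\,F\,$ on $\,\Ov\,$, and $\,b_{ijkl}(x)\,f_j(x)\,$ is continuous because $\,\ff \in \bC(\Ov)\,$ and $\,b_{ijkl}\,$ is continuous. Hence $\,\bu \in \bC^2(\Ov)\,$, and the same scheme gives $\,\na\,p \in \bC(\Ov)\,$.

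Finally I would remove the smoothness assumption by density. Componentwise, Theorem \ref{lemum} shows that $\,\bC^\infty(\Ov)\,$ is dense in $\,\bC_*(\Ov)\,$; choosing $\,\ff^{(m)} \in \bC^\infty(\Ov)\,$ with $\,\ff^{(m)} \to \ff\,$ in $\,\bC_*(\Ov)\,$ and applying the estimate \eqref{tres} to the differences, the corresponding solutions $\,(\bu^{(m)},\,p^{(m)})\,$ form a Cauchy sequence in $\,\bC^2(\Ov) \times C^1(\Ov)\,$. Its limit is the solution associated with $\,\ff\,$, which therefore lies in $\,\bC^2(\Ov) \times C^1(\Ov)\,$ and satisfies \eqref{tres}. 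This is precisely the step where the density Theorem \ref{lemum} is indispensable, as emphasized just after its statement.
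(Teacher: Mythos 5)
Your overall scaffolding is the right one, and it is in fact the paper's: the proof of Theorem \ref{teoum} is not reproduced in this paper (it is quoted from \cite{BVSTOKES}), but the method is displayed in the proof of the companion Theorem \ref{prop2}, and the remark following Theorem \ref{lemum} says explicitly that the density of $\,C^{\infty}(\Ov)\,$ in $\,C_*(\Ov)\,$ is the crucial property behind Theorems \ref{laplaces} and \ref{teoum}. So your final step (estimate for smooth $\,\ff\,$, then a Cauchy-sequence argument in $\,\bC^2(\Ov)\times C^1(\Ov)\,$ via Theorem \ref{lemum}) coincides with the intended strategy; note that it also makes your $\,\ep/3\,$ continuity argument redundant, since for smooth $\,\ff\,$ the solution already belongs to $\,\bC^{2,\,\la}(\Ov)\,$ by the Cattabriga result recalled in section \ref{doisemeio}, and continuity of the second derivatives of the limit then comes for free from uniform convergence.

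The genuine gap is your displayed representation of $\,\pa_{x_k x_l} u_i\,$ with a remainder $\,b_{ijkl}(x)\,f_j(x)\,$ claimed to be bounded on $\,\Ov\,$ with a bound depending only on $\,\Om\,$. You justify it ``as for Newtonian potentials'', but that classical argument rests on translation invariance: for a convolution kernel $\,E(x-y)\,$ one has $\,\pa_{x_l}E=-\,\pa_{y_l}E\,$, which is precisely what allows the $x$-derivative to be converted into a $y$-derivative and integrated by parts (\cite{gilbarg}, Lemma 4.2). The Green tensor $\,G_{ij}(x,y)\,$ of \eqref{dois} has no such symmetry, and the estimates \eqref{quatro} --- the only information about $\,\bG\,$ available here --- do not by themselves yield either your formula or the boundedness of $\,b_{ijkl}\,$. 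Worse, even for the translation-invariant part the claim fails as stated up to the boundary: splitting $\,G_{ij}=E_{ij}+h_{ij}\,$, with $\,E_{ij}\,$ the fundamental (Oseen) tensor, the classical boundary term is $\,f_j(x)\int_{\Ga}\pa_{x_k}E_{ij}(x-y)\,n_l(y)\,dS_y\,$, and since $\,|\pa_x E|\sim|x-y|^{-2}\,$ on a two-dimensional surface this grows like $\,\log\big(1/\mathrm{dist}(x,\Ga)\big)\,$ as $\,x\to\Ga\,$. A patch exists --- first extend $\,\ff\,$ to $\,\Om_{\de}\,$ by the extension operator of Theorem \ref{bah}, so that the boundary term lives on $\,\pa\Om_{\de}\,$, away from $\,\Ov\,$, and treat the corrector $\,h_{ij}\,$ by the same Dini subtraction, its second $x$-derivatives still obeying the $\,|x-y|^{-3}\,$ bound --- but this is exactly the work your proposal skips.

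It is worth seeing how the paper's route avoids this crux altogether: it never represents $\,\na^2\bu\,$ as a singular integral. One subtracts the ``tangent problem'' with constant force $\,\ff(x_0)\,$ (absorbed by the smooth auxiliary solutions of \eqref{doiszetas}) and estimates the \emph{difference} $\,\pa_k w_i(x)-\pa_k w_i(x_0)\,$: the near region uses only the first-derivative bound $\,|\pa_x \bG|\leq C|x-y|^{-2}\,$, giving a contribution $\,C\ro\,\om_{\ff}(3\ro)\,$, while the far region uses the mean value theorem together with the $\,|x-y|^{-3}\,$ bound, giving $\,c\,\ro\,[\,\ff\,]_*\,$. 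This yields a Lipschitz bound on $\,\na\bu\,$ by $\,c_0\,\|\ff\|_*\,$ using nothing beyond \eqref{quatro}; for smooth data $\,\|\na^2\bu\|\,$ is dominated by the Lipschitz constant of $\,\na\bu\,$, and the density step then delivers \eqref{tres} and the $\,\bC^2(\Ov)\times C^1(\Ov)\,$ regularity. Your representation, once repaired, buys an explicit formula for the second derivatives, but at the price of structural information about $\,\bG\,$ that the difference argument never needs.
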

A partial generalization of the above theorem to data in a larger
space is presented in section \ref{thres}, see Theorem \ref{prop2}.
\section{Spaces  $\,B_*(\Ov)\,,$ and  $\,D_*(\Ov)\,$. }\label{two}%
The results obtained in the framework of $\,C_*(\Ov)\,$ spaces,
described in sections \ref{conhecidos} and \ref{doisemeio},
immediately lead us to consider their possible extension to larger
functional spaces of continuous functions. Below we consider
functional spaces $\,B_*(\Ov)\,$ and  $\,D_*(\Ov)\,$, such that, in
particular
$$
\,C_*(\Ov) \subset\,\,B_*(\Ov) \subset \,D_*(\Ov)\,.
$$
In \cite{BVUN} we have introduced the space $\,B_*(\Ov)\,$, as
follows. For each $\,f \in\,C(\Ov)\,,$ we define the semi-norm
\begin{equation}%
\langle\,f\,\rangle_* = \,\sup_{ \,x \in\,\Ov}\, \int_0^\ro \,
\om_f(x;\,r)\,
\,\frac{dr}{r}\,,%
\label{seis-bis}
\end{equation}
and the functional space
\begin{equation}
B_*(\Ov) \equiv\,\{\,f \in\,C(\,\Ov): \,\langle\,f\,\rangle_*
\,<\,+\,\infty\,\}%
\label{cstar}
\end{equation}
endowed with the norm
\begin{equation}%
|\|\,f\,\|_* \equiv \, \langle\,f\,\rangle_* +\,\|\,f\,\|\,. %
\label{sete2}
\end{equation}
The reader should compare \eqref{seis-bis} with \eqref{catriz}.
Obviously, $\,\langle\,f\,\rangle_*\leq \,[\,f\,]_*\,$. Actually,
the $\,B_*(\Ov)\,$ norm is "much weaker". In \cite{BVUN} we have
shown that the inclusion $ \,C_*(\Ov) \subset\,B_*(\Ov)\,$ is
proper, by constructing oscillating functions which belong to
$\,B_*(\Ov)\,$ but not to $\,C_*(\Ov)\,.$ This construction was
recently published in reference \cite{BV-LMS}. Further, we may show
that $\,B_*(\Ov)\,$ is compactly embedded in $\,C(\,\Ov)\,,$ and
that \eqref{igual} still holds for the $\,B_*(\Ov)\,$ semi-norm.

\vspace{0.2cm}

In reference \cite{BVUN} we considered the problem \eqref{lapsim}
with data in $\,B_*(\Ov)\,$, and have proved that the the first
order derivatives of  $\,\psi\,$ are Lipschitz continuous in
$\,\Ov\,.$ Hence, second order derivatives are bounded. However we
were (and are) not able to prove the continuity of these derivatives
(the continuity result would hold if the density theorem \ref{lemum}
were to hold with $\,C_*(\Ov)\,$ replaced by $\,B_*(\Ov)\,$; an
interesting open problem). This led us, at that time, to replace in
the published work \cite{BVJDE} the space $\,B_*(\Ov)\,$ by the more
handy space $\,C_*(\Ov)\,$. Actually, in \cite{BVUN}, the result was
proved for linear elliptic boundary value problem whose solutions
are given by
$$
u(x)=\,\int_{\Om} \,G(x,\,y) \,f(y)\,dy\,,
$$
where the \emph{scalar} Green function $\,G(x,\,y)\,$ satisfies the
estimates stated in \eqref{quatro}. Recently, in reference
\cite{BV-LMS}, we have published the proof of this result in a more
general form, since $\,B_*(\Ov)\,$ was replaced by the larger space
$\,D_*(\Ov)\,$, defined below. In section \ref{thres} we appeal to
the same ideas to extend the result proved in\cite{BV-LMS} to the
Stokes problem \eqref{dois}, see Theorem \ref{prop2}. Obviously, all
the results
proved for data in$\,D_*(\Ov)\,$ hold for data in $\, B_*(\Ov)\,.$%

\vspace{0.2cm}

The space $\,D_*(\Ov)\,$ is defined as follows. Set
$$
S(x;\,r)=\,\{\,y \in\, \Ov:\,|y-\,x|=\, r\,\}
$$
and define, for $\,f \in \,C(\Ov)\,$, $\,x_0 \in\,\Ov\,,$ and
$\,r>\,0\,,$ the quantity
\begin{equation}
\om_f(x_0;\,r) \equiv \, \sup_{\,y \in\,S(x_0;\,r)} \,|\,f(y)-\,f(x_0)\,|\,.%
\label{cinco}
\end{equation}
Further, we define the semi-norm
\begin{equation}
(\,f\,)_* \equiv \, \sup_{x_0 \in\,\Ov}\, \int_{0}^{R} \,
\om_f(x_0;\,r) \,\frac{dr}{\,r}\,,%
\label{emef}
\end{equation}
and the related functional space
$$
D_*(\Ov) \equiv\,\{\,f \in\,C(\,\Ov): \,(\,f\,)_*
<\,\infty\,\}\,.%
$$
A norm in $ D_*(\Ov)\,$ is introduced by setting $
\,|\|\,f\,|\|_*=\,(\,f\,)_*+\,\|\,f\,\|\,.$ We remark that
\eqref{igual} still holds for the $\,D_*(\Ov)\,$ semi-norm.

\vspace{0.2cm}

Note that, compared to $\, B_*(\Ov)\,$, the space $\, D_*(\Ov)\,$ is
defined by replacing in \eqref{seis-bis} the expression of
$\,\om_f(x;\,r)\,$ shown in \eqref{vinte} by the expression given by
\eqref{cinco}. Sets $\,\Om(x;\,r)\,$ are replaced by sets
$\,S(x;\,r)\,.$ Also note that $\,S(x;\,r)\,$ is a proper subset of
$\,\pa\,\Om(x;\,r)\,$ if the distance of $\,x\,$ to the boundary
$\,\pa\,\Om\,$ is less than $\,r\,.$\par%
It is worth noting that the above substitution in the definition of
$\,C_*(\Ov)\,$ is irrelevant. It leaves this space invariant.

\vspace{0.2cm}

The results obtained in the framework of $\,C_*(\Ov)\,$ spaces also
lead us to consider the problem of their \emph{restriction} to
smaller functional spaces, instead of extension to larger spaces.
The main motivation, within the realm of solutions to second order
linear elliptic boundary value problems, can be illustrated as
follows. If $\,f \in \, C^{0,\,\la}(\Ov)\,,$ the second order
derivatives of the solution satisfy $\,D^2\,u \in
C^{0,\,\la}(\Ov)\,$. Let's say, for brevity, that they fully
"remember" their origin. On the other hand, if the data $\,f\,$ is
in $\,C_*(\Ov)\,$, then the second order derivatives of the solution
are merely continuous. Roughly speaking, they completely "forget"
that $\,f\,$ produces a finite the integral on the right hand side
of \eqref{seis}. This situation leads us to look for data spaces,
between H\"older and $\,C_*(\Ov)\,$ spaces, for which solutions
"remember", at least partially, their origin. The following is a
significant example of a functional space of "intermediate type".
Define, for each $\,\al>\,0\,,$ the semi-norm
\begin{equation}
[\,f\,]_{0;\al} \equiv\,\sup_{x,\,y \in\,\Ov \quad
0<\,|x-\,y|<\,1}\,\frac{|f(x)-\,f(y)\,|}{(-\log{|\,x-\,y|})^{-\,\al}}
\,,%
\label{alfas2}
\end{equation}
and the related norm $
\,\|\,f\,\|_{0;\al}\equiv\,[\,f\,]_{0;\al}+\,\|\,f\,\|\,.$ Next,
define functional spaces $\,D^{0,\,\al}(\Ov)\,$ in the obvious way.
Roughly speaking, we have replaced in the definition of H\"older
spaces the quantity
$$
\frac{1}{|\,x-\,y|} \quad \textrm{ by}  \quad
\log{\frac{1}{|\,x-\,y|}}\,.
$$
This similitude leads us to call these spaces H\"olderlog (H\"older-
logarithmic) spaces. The family of H\"olderlog spaces enjoys some
typical, significant property. For instance, $\,D^{0,\,\al}(\Ov)\,$
is a Banach space, and $\,C^{\infty}(\Ov)\,$ is a dense subspace.
Furthermore, for $\,0<\,\bt <\,1<\,\al\,,$ and $\,0<\,\la
\leq\,1\,,$ the following strict embeddings
\begin{equation}
\quad C^{0,\,\la}(\Ov)\subset D^{0,\,\al}(\Ov) \subset
\,C_*(\Ov)\,\subset D^{0,\,\beta}(\Ov)\, \subset \,C(\Ov)%
\label{oitooos}
\end{equation}
hold. Note that $\,D^{0,\,1}(\Ov) \subset \,C_*(\Ov)\,$ is false.
The embeddings $\, D^{0,\,\al}(\Ov)\subset D^{0,\,\bt}(\Ov) \subset
\,C(\Ov)\,,$ for $\,\al>\bt>0\,,$ and the embeddings
$\,D^{0,\,\al}(\Ov) \subset\, C_*(\Ov)\,,$ for $\,\al >\,1\,,$ are
compact.\par%
In forthcoming papers we consider boundary value problems with data
in $\, D^{0,\,\al}(\Ov)\,$. For a second order linear elliptic
problem we show, in reference \cite{BV-JP}, that if $\,f \in
\,D^{0,\,\al}(\Ov)\,$, for some $\,\al>\,1\,,$ then $\,D^2\,\bu \in
D^{0,\,(\al-1)}(\Ov)\,.$ There is just a "partial loss of
regularity". Full application to the Euler problem \eqref{alho},
will be also shown.
\section{The Stokes equations with data in $\,D_*(\Ov)\,.$ Uniform boundedness of $\,\na^2\,\bu\,$ and $\,\na\,p\,$
.}\label{thres}%
In this section we consider the Stokes system and show that the
first order derivatives of the velocity $\,\bu\,,$ and the pressure
$\,p\,,$ are Lipschitz continuous in $\,\Ov\,$ for given external
forces in $\,D_*(\Ov)\,$ (so, in particular, in $\,B_*(\Ov)\,$). We
prove the following result.
\begin{theorem}
Let $\,\ff \in \,\bD_*(\Ov)\,,$ and let $\,(\bu,\,p)\,$ be the
solution to problem \eqref{dois}. There is a constant $\,C\,$, which
depends only on $\,\Om\,,$ such that
\begin{equation}
\|\,\bu\,\|_{1,\,1}+\,\|\,p\,\|_{0,\,1} \leq\,C\, |\|\,\ff\,|\|_*\,.%
\label{treze}
\end{equation}%
So $\,\na^2\,\bu\,,\na\,p \in\,L^{\infty}(\Om)\,$. %moreover
%$$
%\|\,\na^2\,\bu\|_{\infty} +\,\|\,\na\,p\|_{\infty}\leq\,C\, M(\ff)\,.%
%$$
\label{prop2}
\end{theorem}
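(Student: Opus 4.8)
The plan is to derive \eqref{treze} from pointwise bounds on the highest-order derivatives, working entirely from the Green representation \eqref{solias} and the kernel estimates \eqref{quatro}. Since $\Ga\in C^{2,\,\la}$ and $\Om$ is bounded, $\Om$ is quasiconvex, so once I establish $\na^2\,\bu,\ \na\,p\in L^\infty(\Om)$ with norms controlled by $|\|\,\ff\,|\|_*$, the $C^{1,1}\times C^{0,1}$ estimate \eqref{treze} follows by joining two points with a rectifiable path of length comparable to their distance. Thus the whole theorem reduces to the uniform pointwise bound
\[
|\na^2\,\bu(x)|+|\na\,p(x)|\le C\,|\|\,\ff\,|\|_*,\qquad x\in\Om .
\]

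The first-order quantities are immediate: in dimension three the kernels $G_{i\,j}$, $\pa_{x}G_{i\,j}$ and $g_j$ are by \eqref{quatro} of orders $|x-y|^{-1}$ and $|x-y|^{-2}$, all integrable against spherical shells, so a single differentiation under the integral sign in \eqref{solias} is legitimate and gives $\|\,\bu\,\|+\|\,\na\,\bu\,\|+\|\,p\,\|\le C\,\|\,\ff\,\|$. The delicate terms are $\pa^2 u_i/\pa x_k\,\pa x_l$ and $\pa p/\pa x_k$, whose kernels are of order $|x-y|^{-3}$ and hence just fail to be integrable. Here I would use the classical \emph{subtracted} representation, exactly as in the Newtonian-potential argument of \cite{gilbarg} and as carried out for the scalar Green function in \cite{BV-LMS}: fixing $x\in\Om$, enclosing $\Om$ in a fixed large ball $\Om_0$ and extending $f_j$ by zero,
\[
\frac{\pa^2 u_i}{\pa x_k\,\pa x_l}(x)=\int_{\Om}\frac{\pa^2 G_{i\,j}}{\pa x_k\,\pa x_l}(x,y)\,\big[\,f_j(y)-f_j(x)\,\big]\,dy
-f_j(x)\int_{\pa\Om_0}\frac{\pa G_{i\,j}}{\pa x_k}(x,y)\,\cn_l(y)\,dS_y ,
\]
together with the analogous identity for $\pa p/\pa x_k$ with $g_j$ replacing $G_{i\,j}$. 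The boundary term is harmless: $x$ stays at fixed positive distance from $\pa\Om_0$, so by the second line of \eqref{quatro} it is bounded by $C\,\|\,\ff\,\|$.

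The core is the singular integral, and this is precisely where the space $\,\bD_*(\Ov)\,$ is tailored. Writing $y=x+r\,\te$ with $\te\in S^2$, $dy=r^{2}\,dr\,d\te$, restricting $\te$ to the directions for which $x+r\,\te\in\Om$, and using $|\pa^2_{kl}G_{i\,j}|\le C\,|x-y|^{-3}$ with $|f_j(x+r\,\te)-f_j(x)|\le\om_{f_j}(x;\,r)$ on the sphere $S(x;\,r)$, the factors $r^{-3}$ and $r^{2}$ combine into the scale-invariant $dr/r$ and yield
\[
\Big|\int_{\Om}\frac{\pa^2 G_{i\,j}}{\pa x_k\,\pa x_l}(x,y)\,\big[\,f_j(y)-f_j(x)\,\big]\,dy\Big|
\le C\int_0^{R}\frac{\om_{f_j}(x;\,r)}{r}\,dr\le C\,(\,f_j\,)_* .
\]
It is exactly the sphere-based seminorm \eqref{emef} that appears, and the restriction of $\te$ to $\Om$ accounts for the fact, noted in section \ref{two}, that near $\Ga$ the set $S(x;\,r)$ is only a proper part of the full sphere. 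The same computation, with $|\pa_k g_j|\le C\,|x-y|^{-3}$, controls $\na\,p$. Collecting terms gives $|\na^2\,\bu(x)|+|\na\,p(x)|\le C\,(\,(\,\ff\,)_*+\|\,\ff\,\|)=C\,|\|\,\ff\,|\|_*$ uniformly in $x$, which is the asserted bound.

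The main obstacle is the rigorous justification of the subtracted representation and of the boundedness of the correction term directly from the pointwise estimates \eqref{quatro}: unlike the pure Newtonian potential, the Stokes tensor couples the components through the incompressibility constraint, but the singular analysis depends only on the kernel orders and is handled as in \cite{BVSTOKES}. A second, essential point is what the argument does \emph{not} give: it yields only $L^\infty$ bounds, not the continuity of $\na^2\,\bu$ and $\na\,p$. Continuity would follow if $\,C^{\infty}(\Ov)\,$ were dense in $\,\bD_*(\Ov)\,$, the analogue of Theorem \ref{lemum}; this is exactly the open density problem flagged for $\,B_*(\Ov)\,$ in section \ref{two}, which is why \eqref{treze} asserts boundedness rather than continuity and is only a partial extension of Theorem \ref{teoum}.
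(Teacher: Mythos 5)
There is a genuine gap at the center of your argument: the ``subtracted representation''
\[
\frac{\pa^2 u_i}{\pa x_k\,\pa x_l}(x)=\int_{\Om}\frac{\pa^2 G_{i\,j}}{\pa x_k\,\pa x_l}(x,y)\,\big[f_j(y)-f_j(x)\big]\,dy-\,f_j(x)\int_{\pa\Om_0}\frac{\pa G_{i\,j}}{\pa x_k}(x,y)\,\nu_l(y)\,dS_y
\]
is not available for the Green tensor of a bounded domain. In the Newtonian-potential argument of \cite{gilbarg} this identity comes from trading $\pa_{x_l}$ for $-\,\pa_{y_l}$ (translation invariance of the kernel) and integrating by parts in $y$; the tensor $G_{i\,j}(x,y)$ has no such symmetry, and indeed it is not even defined for $y\in\pa\Om_0\setminus\Ov$, so the boundary integral cannot be written down. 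Extending $\ff$ by zero outside $\Om$ makes matters worse, since it destroys the continuity of $\ff$ across $\Ga$ and with it the modulus bounds $\om_{f_j}(x;\,r)$ precisely near the boundary, where \eqref{quatro} is delicate. A repair by splitting $G=E+H$, with $E$ the translation-invariant fundamental (Oseen) tensor, does not come for free either: the corrector $H$ still carries the full $|x-y|^{-3}$ second-derivative singularity as $x$ approaches $\Ga$, so the ``GT argument'' applies only to part of the kernel. This is exactly the point you flag as ``the main obstacle'' and then pass over by citing \cite{BVSTOKES}. Note also that your identity presupposes that $\na^2\,\bu(x)$ exists pointwise at every $x$, which is part of what has to be proved.

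The paper avoids any second-derivative representation. For each $x_0$ it solves the auxiliary Stokes problem \eqref{doiszero} with the constant force $\ff(x_0)=\sum_i f_i(x_0)\,\boldsymbol{e}_i$, whose solution $\bv(x_0,\cdot)$ is smooth with $\|\bv(x_0,\cdot)\|_{1,1}+\|q(x_0,\cdot)\|_{0,1}\le K\,\|\ff\|$; this is the device that legitimately implements the subtraction $f_j(y)-f_j(x_0)$. Setting $\bw=\bu-\bv$, it estimates the increment of \emph{first} derivatives,
\[
\pa_k w_i(x)-\pa_k w_i(x_0)=\int_{\Om}\big(\pa_k G_{i\,j}(x,y)-\pa_k G_{i\,j}(x_0,y)\big)\,\big(f_j(y)-f_j(x_0)\big)\,dy\,,
\]
splitting at $|y-x_0|=2\ro$, $\ro=|x-x_0|$: the near region uses only the second line of \eqref{quatro}, and the far region the mean value theorem together with the third-order bound, yielding $c\,\ro\,\big((\ff)_*+\om_{\ff}(3\ro)\big)$ and hence the Lipschitz estimate \eqref{dezasete}; $\na^2\,\bu\in L^{\infty}$ then follows a.e.\ (Rademacher) — the opposite direction to your quasiconvexity reduction, and the one that works without pointwise second derivatives. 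Your far-field computation — a kernel of order $r^{-3}$ against $r^2\,dr\,d\te$ and the sphere-based modulus, giving $\int_0^{R}\om_{\ff}(x;\,r)\,dr/r\le(\ff)_*$ — is the same heart as the paper's estimate of $I_2$, so your singular analysis is sound; what is missing is a correct substitute for the unjustified representation formula, and the tangent problem plus first-derivative increments is exactly that substitute.
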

\begin{proof}
In the following we merely consider the velocity, since the pressure
is treated similarly (see also \cite{BVSTOKES}). Let
$\,\boldsymbol{e}_i(x)\,,$ $\,i=\,1,\,2,\,3\,,$ denote three
constant vector fields in $\,\R^3\,$, everywhere equal to the
corresponding cartesian coordinate unit vector
$\,\boldsymbol{e}_i\,$. Define the auxiliary systems
\begin{equation}
\left\{
\begin{array}{l}
-\,\De\,\bv_i(x)+\,\na\,q_i(x)=\,\boldsymbol{e}_i(x) \quad \textrm{in} \quad \Om \,,\\
\na \cdot\,\bv_i=\,0  \quad \textrm{in} \quad \Om \,,\\
\bv_i=\,0 \quad \textrm{on} \quad \Ga \,.%
\end{array}
\right.%
\label{doiszetas}
\end{equation}
Clearly, $\,\bv_i\,$ and $\,q_i\,$ are smooth. Fix a constant
$\,K(\Om)\,$ such that
\begin{equation}
\|\,\bv_i\,\|_{1,\,1} +\,\|\,q_i\,\|_{0,\,1} \leq\,K(\Om)\,,%
\label{quinze}
\end{equation}
for $\,i=\,1,\,2,\,3\,.$ Next, in correspondence to each point
$\,x_0 \in \,\Om\,,$ define the auxiliary system (a kind of "tangent
problem" at point $x_0\,$)
\begin{equation}
\left\{
\begin{array}{l}
-\,\De\,\bv(x_0,\,x)+\,\na\,q(x_0,\,x)=\, \ff(x_0,\,x) \quad \textrm{in} \quad \Om \,,\\
\na \cdot\,\bv=\,0  \quad \textrm{in} \quad \Om \,,\\
\bv=\,0 \quad \textrm{on} \quad \Ga \,,%
\end{array}
\right.%
\label{doiszero}
\end{equation}
where $\, \ff(x_0,\,x)\equiv\,\ff(x_0)\,,\forall \,x\in \,\Om\,,$ is
a constant vector in $\,\Om\,.$ Since
$$
\ff(x_0,\,x)=\,\sum_{i} \,f_i(x_0)\, \,\boldsymbol{e}_i(x)\,,
$$
the functions $\,\bv(x_0,\,x)\,$ and $\,q(x_0,\,x)\,$ are smooth for
each fixed $\,x_0\,$. Moreover,
\begin{equation}
\|\,\bv(x_0,\,\cdot)\,\|_{1,\,1} +\,\|\,q(x_0,\,\cdot)\,\|_{0,\,1}
\leq\,K\,|\ff(x_0)\,|\leq\,K\,\|\,\ff\,\|\,.%
\label{quinze}
\end{equation}
Recall that $\,K\,$ is independent of $\,x_0\,.$ For convenience set
$\, \bv(x)=\,\bv(x_0,\,x)\,,$ and so on. By setting
$$
\bw(x)\equiv\,\bu(x)-\,\bv(x)\,,
$$
one has
$$
w_i(x)=\,\int_{\Om} \,G_{i\,j}(x,\,y)\,(\,f_j(y)-\,f_j(x_0)\,) \,dy
\,.
$$
Furthermore,
$$
\pa_k\,w_i(x)-\,\pa_k\,w_i(x_0)=\,\int_{\Om}
\,\big(\,\pa_k\,G_{i\,j}(x,\,y)-\,\pa_k\,G_{i\,j}(x_0,\,y)\,\big)
\,(\,f_j(y)-\,f_j(x_0)\,) \,dy \,,
$$
where $\,\pa_k\,$ stands for differentiation with respect to
$\,x_k\,,$  and $\,\pa_k\,w_i(x_0)\,$ means the value of
$\,\pa_k\,w_i(x)\,$ at the particular point $\,x=\,x_0\,.$\par%
Clearly
$$
|\,\pa_k\,w_i(x)-\,\pa_k\,w_i(x_0)\,|\leq\,\int_{\Om} \,
|\,\pa_k\,G_{i\,j}(x,\,y)-\,\pa_k\,G_{i\,j}(x_0,\,y)\, |
\,|\,f_j(y)-\,f_j(x_0)\,| \,dy\,.
$$
By setting $\,\rho =\,|x-\,x_0|\,$ one gets
\begin{equation}
\begin{array}{l}
|\,\pa_k\,w_i(x)-\,\pa_k\,w_i(x_0)\,|\leq \\
\\
\,\int_{\Om(x_0;\,2\,\rho)}|\,\pa_k\,G_{i\,j}(x,\,y)-\,\pa_k\,G_{i\,j}(x_0,\,y)\,
|
\,|\,f_j(y)-\,f_j(x_0)\,| \,dy+\\
\\
\int_{\Om_c(x_0;\,2\,\rho)}\,
|\,\pa_k\,G_{i\,j}(x,\,y)-\,\pa_k\,G_{i\,j}(x_0,\,y)\,|
\,|\,f_j(y)-\,f_j(x_0)\,|\, dy \\
\\
\equiv\,I_1(x_0,\,x,\,\ro) +\,I_2(x_0,\,x,\,\ro)\,.
\end{array}
\label{dezaseis}
\end{equation}
By appealing to \eqref{quatro}, we show that
\begin{equation}
\begin{array}{l}
I_1(x_0,\,x,\,\ro) \leq\,C \,\Big(\,
\int_{\Om(x_0;\,2\,\rho)}\,\frac{C}{|\,x_0-\,y\,|^2}
\,|\,f_j(y)-\,f_j(x_0)\,|dy+\\
\\
\int_{\Om(x;\,3\,\rho)}\, \frac{C}{|x-\,y|^2}\,
|\,f_j(y)-\,f_j(x_0)\,| \,dy\,\Big)\, \equiv\,J_1(x_0,\,x,\,\ro)
+\,J_2(x_0,\,x,\,\ro)\,.
\end{array}
\label{dezzaseis-n}
\end{equation}
By setting $\,r=\,|x_0-\,y\,|\,,$  and by appealing to
polar-spherical coordinates centered in $\,x_0\,$, one easily shows
that $\, J_1(x_0,\,x,\,\ro)
\leq\,C\,\ro\,\om(\,\Om(x_0;\,2\,\rho\,)\,)\,,$ where $C$ depends
only on $\,\Om\,.$ Similarly, $\,J_2(x_0,\,x,\,\ro)
\leq\,C\,\ro\,\om(\,\Om(x;\,3\,\rho\,)\,)\,.$ It follows that
(recall definition \eqref{cinco})
$$
I_1(x_0,\,x,\,\ro) \leq\,C\,\ro\,\om_{\ff}(3\,\ro)\,,
$$
where $C$ does not depend on the particular points $\,x_0,\,x \in
\,\Ov\,,$ and $\,\rho =\,|x-\,x_0|\,.$\par%
On the other hand, by appealing to the mean-value theorem and to
\eqref{quatro}, we get
$$
|\,\pa_k\,G_{i\,j}(x,\,y)-\,\pa_k\,G_{i\,j}(x_0,\,y)\, |
\leq\,C\,\rho\,|x'-\,y|^{\,-3}\leq\,
C\,\rho\,2^3\,|x_0-\,y|^{\,-3}\,,
$$
for each $\,y\in\,\Om_c (x_0;\,2\,\rho)\,,$ where the point $x'$
belongs to the straight segment joining $x_0$ to $x$. Consequently,

$$
I_2(x_0,\,x,\,\ro) \leq \, c\,\rho \,
\int_{\Om_c(x_0;\,2\,\rho)}\,|\,f_j(y)-\,f_j(x_0)\,|\,
\frac{dy}{\,|x_0-\,y|^3} \, \leq\,c\,\rho \,
\int_{2\,\rho}^{R}\,\om_{\ff}(r)\,\frac{dr}{r}\,.
$$
Hence,
$$
I_2(x_0,\,x,\,\ro) \leq\,c\,\rho \,
\int_{0}^{R}\,\om_{\ff}(r)\,\frac{dr}{r}=\,\,c\,\rho
\,(\,\ff\,)_*\,.
$$
Next, by appealing to equation \eqref{dezaseis}, and to the
estimates proved above for $I_1$ and $I_2\,,$ we show that
$$
|\,\na\,\bw(x)-\,\na\,\bw(x_0)\,| \leq\,c\,\rho\,(
\,(\,\ff\,)_*+\,\om_{\ff}(3\,\ro)\,)\,.
$$
Consequently,
$$
\begin{array}{l}
|\,\na\bu(x)-\,\na\,\bu(x_0)| \leq
\,|\,\na\bw(x)-\,\na\,\bw(x_0)\,|+\,
|\,\na\,\bv(x)-\,\na\,\bv(x_0)\,|\\
\\
\leq\,c\,\rho\,\big( \,(\,\ff\,)_* +\,\om_{\ff}(3\,\ro)\,+
K\,\|\,\ff\,\| \,\big)\,.
\end{array}
$$
So,
\begin{equation}
\frac{|\,\na\,\bu(x)-\,\na\,\bu(x_0)|}{|x-\,x_0|}
\leq\,C\,\||\,\ff\,|\|_*\,,
\quad \forall x,\,x_0 \in \Om, \,x \neq\,x_0\,.%
\label{dezasete}
\end{equation}
This proves \eqref{treze} for the velocity $\,\bu\,.$ Similar
calculations lead to the corresponding result for the pressure.
\end{proof}
It is worth noting that our proofs depend only on having suitable
estimates for the Green's functions. For instance, the argument
applied in the above proof to study the system \eqref{dois} with
data in $\,D_*(\Ov)\,$ can be applied to the system \eqref{lapsim}
with data in $\,D_*(\Ov)\,,$ since the scalar Green's function
$\,G(x,\,y)\,$ related to this last problem satisfies exactly the
estimates claimed in equation \eqref{quatro} for the components
$\,G_{i\,j}(x,\,y)\,$. It follows that Theorem \ref{laplaces} still
holds in the above "weak form", for data in $\,D_*(\Ov)\,.$ Further,
in correspondence with Theorem \ref{rotlaplaces}, we get the
following statement.
\begin{theorem}
Let $\,\te \in \,D_*(\Ov)\,,$ and let $\,\bv\,$ be the solution of
problem \eqref{naosei2}. Then $\,\na\,\bv \in\, L^{\infty}(\Ov)\,$,
and $\,\|\,\na\,\bv\,\|_{L^{\infty}(\Ov)} \leq
\,c_0\,\||\,\te\,|\|_*\,.$ So, for divergence free vector fields,
tangent to the boundary, the estimate%
\begin{equation}
\|\,\na\,\bv\,\|_{L^{\infty}(\Ov)} \leq
\,c_0\,\||\,{curl}\,\bu\,|\|_*\,%
\label{dezset}
\end{equation}
holds.%
\label{lepasb}
\end{theorem}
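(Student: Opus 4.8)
The plan is to derive Theorem~\ref{lepasb} from the ``weak form'' of Theorem~\ref{laplaces} exactly as Theorem~\ref{rotlaplaces} was derived from the original Theorem~\ref{laplaces}. The key observation, already made in the text preceding the statement, is that the argument of the proof of Theorem~\ref{prop2} depends only on the pointwise estimates \eqref{quatro} for the relevant Green's function, and that the scalar Green's function $G(x,\,y)$ associated with the Dirichlet problem \eqref{lapsim} satisfies precisely these same estimates. Hence Theorem~\ref{laplaces} holds in the weak form: if $\,\te \in \,D_*(\Ov)\,$ and $\,\psi\,$ solves \eqref{lapsim}, then $\,\psi \in \,C^{1,\,1}(\Ov)\,$ with
\begin{equation}
\|\,\psi\,\|_{1,\,1} \leq \,c_0\,\||\,\te\,|\|_*\,.
\label{psiweak}
\end{equation}
In particular the first order derivatives of $\,\psi\,$ are Lipschitz, so the second order derivatives $\,\na^2\,\psi\,$ belong to $\,L^{\infty}(\Ov)\,$, with the same bound by $\,\||\,\te\,|\|_*\,$.

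Next I would invoke the explicit representation \eqref{grot} of the solution to the elliptic system \eqref{naosei2}. Since $\,\bv=\,{Rot}\,\psi\,$ is obtained from $\,\na\,\psi\,$ by a fixed $\,\tfrac{\pi}{2}\,$ rotation, each component of $\,\na\,\bv\,$ is, up to sign, a second order derivative of $\,\psi\,$. Therefore the bound on $\,\na^2\,\psi\,$ transfers immediately to $\,\na\,\bv\,$: one has $\,\na\,\bv \in \,L^{\infty}(\Ov)\,$ together with the estimate
\begin{equation}
\|\,\na\,\bv\,\|_{L^{\infty}(\Ov)} \leq \,c_0\,\||\,\te\,|\|_*\,,
\label{nabvweak}
\end{equation}
which is precisely the first assertion of the theorem. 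Finally, recalling that $\,\te=\,{curl}\,\bv\,$ for the divergence free field $\,\bv\,$ tangent to the boundary solving \eqref{naosei2}, substitution of $\,\te=\,{curl}\,\bu\,$ into \eqref{nabvweak} yields \eqref{dezset}. (I note that the statement writes $\,{curl}\,\bu\,$; in the planar setting $\,\bu\,$ plays the role of $\,\bv\,$, so no further argument is needed.)

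The only genuine point to check is that the proof of Theorem~\ref{prop2} really does carry over verbatim to the scalar problem \eqref{lapsim}. This is where I would be most careful. The vector argument splits $\,\na\,\bu-\,\na\,\bu(x_0)\,$ into a smooth ``tangent problem'' part $\,\bv\,$ and a remainder $\,\bw\,$, estimates the remainder by splitting the domain into $\,\Om(x_0;\,2\rho)\,$ and $\,\Om_c(x_0;\,2\rho)\,$, and controls the near and far pieces $\,I_1,\,I_2\,$ using the first-derivative bound $\,|\na_x G| \leq C|x-y|^{-2}\,$ and the mean-value estimate $\,|\na_x G(x,\cdot)-\na_x G(x_0,\cdot)| \leq C\rho|x_0-y|^{-3}\,$ on the far region, respectively. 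Because the scalar kernel $\,G(x,\,y)\,$ obeys exactly these same bounds \eqref{quatro}, every step reproduces itself with $\,\te\,$ in place of $\,\ff\,$ and $\,\na\,\psi\,$ in place of $\,\na\,\bu\,$, giving $\,|\na\psi(x)-\na\psi(x_0)| \leq \,C\,|x-x_0|\,\||\,\te\,|\|_*\,$; differentiating once more and combining with the rotation \eqref{grot} closes the argument. No new estimate beyond \eqref{quatro} and the weak form \eqref{psiweak} of Theorem~\ref{laplaces} is required.
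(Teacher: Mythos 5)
Your proposal is correct and follows essentially the same route as the paper: the paper also obtains Theorem~\ref{lepasb} by noting that the proof of Theorem~\ref{prop2} uses only the Green's function bounds \eqref{quatro}, which the scalar kernel of \eqref{lapsim} satisfies, so Theorem~\ref{laplaces} holds in the weak ($D_*$, Lipschitz/$L^\infty$) form, and then transferring the estimate to $\,\bv\,$ via the representation $\,\bv=\,{Rot}\,\psi\,$ of \eqref{grot}, exactly as Theorem~\ref{rotlaplaces} was deduced. Your added verification that each step of the $I_1,\,I_2$ splitting carries over verbatim, and your remark on the $\,{curl}\,\bu\,$ versus $\,{curl}\,\bv\,$ notation, are consistent with the paper's (largely implicit) argument.
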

This specific case will be useful in considering the Euler equations
with data in $\,B_*(\Ov)\,.$ For more results and comments on the
above subject we refer to \cite{BV-LMS}.

%\part{}\label{parthree}

\vspace{0.3cm}

\section{The space $\, B_*(\Ov)\,$ and the Euler equations. }%
\label{bbesp}
Concerning possible extensions of the results obtained for the $2-D$
evolution Euler equations, from $\,\C_*(\Ov)\,$ to $\,\bB_*(\Ov)\,,$
we show here a partial result in this direction. We clearly pay the
price of the loss of regularity for solutions to the auxiliary
elliptic system \eqref{naosei2}. This leads us to
replace continuity in time by boundedness in time.\par%
Furthermore, we simplify our task, in a \emph{quite substantial
way}, by assuming that external forces vanish, instead of assuming
the very stringent condition $\,{curl}\,\ff \in \,L^1(\,R^+;\,
B_*(\Ov)\,)\,$.\par%
Below, we prove the following weak extension of theorem
\ref{eulaszeta}.
\begin{theorem}
Let $\,\bv\,$ be the solution to the Euler equations \eqref{alho},
where the initial data $\,\bv_0\,$ is divergence free, tangential to
the boundary, and satisfies $\,{curl}\,\bv_0 \in \,B_*(\Ov)\,.$
Furthermore, suppose $\,\ff=\,0\,$. Then $\,{curl}\,\bv
\in\,L^{\infty}(\,0,\,T;\,B_*(\Ov)\,)\,$, and there is a constant
$\,C_T\,$ (an explicit expression can be easily obtained) such that
\begin{equation}
|\|\,{curl}\,\bv(t)\,\|_* \leq \, C_T\,|\|\,{curl}\,\bv_0\,\|_{*} \,,%
\label{estmeulerzeta2}
\end{equation}
for a.a. $\,t \in\,(0,\,T)\,.$
\label{zeulerin}%
\end{theorem}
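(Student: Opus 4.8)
The plan is to exploit the transport structure of the two–dimensional vorticity, convert the problem into a composition estimate for the $B_*(\Ov)$ semi-norm, and close everything by Gronwall's inequality. Write $\z=\,{curl}\,\bv$ and $\z_0=\,{curl}\,\bv_0\,.$ Since $\ff=\,0\,,$ the scalar vorticity is transported by the flow, so that $\z(t)=\,\z_0\circ\,\U_t^{-1}\,,$ where $\U_t$ denotes the flow map of the velocity field $\bv(t,\cdot)\,$ (divergence free and tangent to $\Ga\,$), a homeomorphism of $\Ov$ onto itself. As $B_*(\Ov)\subset\,C(\Ov)\,,$ Theorem \ref{ppum} guarantees a global solution and makes the flow well defined; moreover the sup-norm is conserved, $\|\,\z(t)\,\|=\,\|\,\z_0\,\|\,.$ All estimates below are understood as a priori bounds, to be obtained first on smooth approximations (for which the flow is classical) with constants depending only on $\Om$ and on $\|\,\z_0\,\|\,,$ and then passed to the limit.

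First I would record the Lipschitz character of the flow. Applying Theorem \ref{lepasb} to $\bv(s,\cdot)\,$ (and using $B_*(\Ov)\subset\,D_*(\Ov)\,,$ so that $|\|\,\cdot\,\|_*$ controls the $D_*$ norm) gives $M(s):=\,\|\,\na\,\bv(s)\,\|_{L^{\infty}(\Ov)}\leq\,c_0\,|\|\,\z(s)\,\|_*\,.$ A Gronwall argument for the characteristic equation $\dot{\U}_s=\,\bv(s,\U_s)$ then shows that $\U_t$ and its inverse are Lipschitz on $\Ov$ with the common constant $L(t)=\,\exp\!\big(\int_0^t M(s)\,ds\big)\geq\,1\,.$

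The central step is the analogue of Lemma \ref{columbus} for the $B_*$ semi-norm. Fix $x_0\in\,\Ov$ and $r>\,0\,.$ If $y\in\,\Om(x_0;\,r)$ then $\U_t^{-1}(y)\in\,\Om(\U_t^{-1}(x_0);\,L(t)\,r)\,,$ hence $\om_{\z_0\circ\,\U_t^{-1}}(x_0;\,r)\leq\,\om_{\z_0}(\U_t^{-1}(x_0);\,L(t)\,r)\,.$ Integrating against $dr/r\,,$ substituting $s=\,L(t)\,r\,,$ and using that $\U_t^{-1}$ maps $\Ov$ onto $\Ov$ together with the radius-change inequality \eqref{igual} for the $B_*$ semi-norm, I obtain
\[
\langle\,\z(t)\,\rangle_*=\,\langle\,\z_0\circ\,\U_t^{-1}\,\rangle_*\leq\,\langle\,\z_0\,\rangle_*+\,2\,\|\,\z_0\,\|\int_0^t M(s)\,ds\,.
\]
Adding the conserved quantity $\|\,\z(t)\,\|=\,\|\,\z_0\,\|$ and inserting $M(s)\leq\,c_0\,|\|\,\z(s)\,\|_*\,,$ the scalar function $z(t):=\,|\|\,\z(t)\,\|_*$ satisfies $z(t)\leq\,z(0)+\,2\,c_0\,\|\,\z_0\,\|\int_0^t z(s)\,ds\,.$ Gronwall's inequality then yields $|\|\,\z(t)\,\|_*\leq\,e^{\,2\,c_0\,\|\,\z_0\,\|\,t}\,|\|\,\z_0\,\|_*\,,$ which is \eqref{estmeulerzeta2} with $C_T=\,e^{\,2\,c_0\,\|\,\z_0\,\|\,T}\,.$

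The main obstacle is precisely this composition estimate: unlike Lemma \ref{columbus}, where the supremum over base points can be taken inside the radial integral, the $B_*$ semi-norm keeps the supremum outside, so one must verify that it is preserved under the change of base point $x_0\mapsto\,\U_t^{-1}(x_0)$ and the accompanying rescaling of the radius — exactly what the surjectivity of the flow on $\Ov$ and \eqref{igual} provide. A secondary point, responsible for the weaker conclusion, is that Theorem \ref{lepasb} yields only an $L^{\infty}$ (not a continuous) bound on $\na\,\bv\,$; this prevents continuity in time of the $B_*$ norm and forces the statement to be phrased as boundedness, i.e. for a.a. $t\in(0,T)\,.$
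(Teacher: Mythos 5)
Your core computation coincides with the paper's own skeleton: transport of the vorticity along the backward flow, the pointwise modulus estimate $\om_{\z(t)}(x;r)\leq\,\om_{\z_0}(\cdot)$, the change of variables in the radial integral, the surjectivity of $\,\U(0,\,t,\,\cdot)\,$ on $\,\Ov\,$, and the radius-change inequality \eqref{igual} for the $\,B_*\,$ semi-norm. Where you genuinely diverge is the regularity input for the flow, and that is where the gap sits. You take the flow to be \emph{Lipschitz}, with constant $\,L(t)=\exp\big(\int_0^t\|\na\,\bv(s)\|_{L^\infty}\,ds\big)\,$, and control $\,\|\na\,\bv(s)\|_{L^\infty}\,$ by Theorem \ref{lepasb} applied to $\,\z(s)\in B_*(\Ov)\,$ --- i.e. by the very conclusion being proved. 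Your declared remedy, a priori bounds on smooth approximations, is not available off the shelf in this setting: it requires smooth data $\,\z_0^\epsilon\,$ with $\,|\|\z_0^\epsilon\|_*\,$ uniformly bounded and converging to $\,\z_0\,$ (plus lower semicontinuity of $\,\langle\cdot\rangle_*\,$ under uniform convergence, which you never state, though it does hold by Fatou). But the paper explicitly flags that the density theorem \ref{lemum} with $\,C_*(\Ov)\,$ replaced by $\,B_*(\Ov)\,$ is an \emph{open problem}, and the extension operator of Theorem \ref{bah}, needed before one can mollify up to the boundary, is only known to be bounded on $\,C_*(\Ov)\,$. Even the weaker uniform-bound approximation you need would have to be constructed (say via a reflection extension controlling $\,\om_f(x;r)\,$ pointwise); as written, this step is the crux of your argument and is missing, not a routine remark.

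The paper's proof shows how to avoid the bootstrap entirely. Instead of a Lipschitz flow, it uses the H\"older estimate \eqref{uus}, $\,|\U(0,t,x)-\U(0,t,y)|\leq K\,|x-y|^{\de}\,$ with $\,\de=e^{-c_1 B T}\,$ and $\,B=\|\z_0\|\,$, which is the simplified form (for $\,\ff=0\,$) of estimate \eqref{dois6} from the $\,C(\Ov)\,$ theory of Theorem \ref{ppum}; it therefore needs only $\,\z_0\in C(\Ov)\,$ and is available \emph{before} any $\,B_*\,$ information is known. The substitution $\,\ta=K\,r^{\de}\,$ then yields $\,\langle\z(t)\rangle_*\leq\frac{1}{\de}\,\langle\z_0\rangle_{*,K\ro^{\de}}\,$ directly --- the same mechanism as Lemma \ref{columbus} --- with no Gronwall loop, no approximation scheme, and no appeal to Theorem \ref{lepasb} inside the proof (that theorem enters only afterwards, for Theorem \ref{zeulerindos}). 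A side effect: the paper's constant is of order $\,e^{c_1\|\z_0\|T}\,$, whereas your Gronwall-on-Gronwall route, even if repaired, produces a doubly exponential $\,C_T\,$. If you replace your Lipschitz-flow step by the H\"older estimate \eqref{uus}, the rest of your argument goes through verbatim and the circularity disappears.
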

A weak extension of theorem \ref{eulas} follows immediately from
theorem \ref{zeulerin} together with Theorem \ref{lepasb}. One has
the following result.
\begin{theorem}
Under the assumptions of theorem \ref{zeulerin} the estimate
\begin{equation}
\|\,\nabla\,\bv\,\|_{L^{\infty}(Q_T)}
\leq\,C_T\,|\|\,{curl}\,\bv_0\,\|_{*}%
\label{istase}
\end{equation}
holds almost everywhere in $\,Q_T\,.$%
\label{zeulerindos}
\end{theorem}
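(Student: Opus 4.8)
The plan is to obtain \eqref{istase} by decoupling the spatial elliptic regularity from the temporal transport: I would freeze time, apply the elliptic estimate at each fixed instant, and then take the essential supremum in $t$. The two substantive ingredients are already in hand. Theorem \ref{zeulerin} controls the $B_*$ norm of $\,{curl}\,\bv(t)\,$ along the flow, while Theorem \ref{lepasb} converts a $D_*$ bound on the curl into an $L^\infty$ bound on $\,\na\,\bv\,$ for the elliptic reconstruction \eqref{naosei2}. Thus the whole argument is an assembly step, exactly as announced in the text.

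First I would fix $t$ in the full-measure subset of $(0,T)$ on which \eqref{estmeulerzeta2} holds and observe that, at such a time, $\bv(t)$ is divergence free and tangent to $\Ga$; since $\,\Om\,$ is simply connected, $\bv(t)$ is therefore precisely the unique solution of the elliptic system \eqref{naosei2} with datum $\,\te=\,{curl}\,\bv(t)\,$. Because Theorem \ref{zeulerin} gives $\,{curl}\,\bv(t) \in\,B_*(\Ov)\,$ and $\,B_*(\Ov) \subset\,D_*(\Ov)\,$, this datum lies in $\,D_*(\Ov)\,$, so Theorem \ref{lepasb} applies pointwise in time and yields
\[
\|\,\na\,\bv(t)\,\|_{L^{\infty}(\Ov)} \leq\,c_0\,\||\,{curl}\,\bv(t)\,|\|_*
\]
for almost every $\,t\in\,(0,T)\,$.

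Next I would pass from the $D_*$ norm appearing here to the $B_*$ norm that Theorem \ref{zeulerin} controls. This requires only the elementary comparison $\,\||\,f\,|\|_* \leq\,|\|\,f\,\|_*\,$, which follows from $\,S(x;\,r) \subset\,\Om(x;\,r)\,$: the sphere version of $\,\om_f\,$ in \eqref{cinco} is dominated by the ball version in \eqref{vinte}, whence $\,(\,f\,)_* \leq\,\langle\,f\,\rangle_*\,$ and the two norms compare as claimed. Combining this with \eqref{estmeulerzeta2} gives, for almost every $t$,
\[
\|\,\na\,\bv(t)\,\|_{L^{\infty}(\Ov)} \leq\,c_0\,|\|\,{curl}\,\bv(t)\,\|_* \leq\,c_0\,C_T\,|\|\,{curl}\,\bv_0\,\|_*\,,
\]
and taking the essential supremum over $\,t\in\,(0,T)\,$ produces \eqref{istase} with a renamed constant $\,C_T\,$.

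I do not expect a serious obstacle: the real work is already carried by Theorems \ref{zeulerin} and \ref{lepasb}. The only points needing care are bookkeeping ones — that the estimate is applied on the same full-measure set where the transport bound holds, that $\,{curl}\,\bv(t)\,$ is genuinely the datum of \eqref{naosei2} at each such instant, and the harmless passage between the $\,B_*\,$ and $\,D_*\,$ norms. I would note in passing that the unavoidable weakening relative to Theorem \ref{eulas} is the replacement of continuity in time by mere boundedness, which ultimately reflects the fact that the density-up-to-the-boundary property (Theorem \ref{lemum}) is not known to hold for $\,B_*(\Ov)\,$.
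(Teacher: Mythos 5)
Your proposal is correct and takes essentially the same route as the paper: the paper obtains Theorem \ref{zeulerindos} precisely by combining Theorem \ref{zeulerin} with the elliptic estimate of Theorem \ref{lepasb}, applied through the system \eqref{naosei2} at (almost) every fixed time. The one detail you spell out explicitly — the comparison $\,(\,f\,)_* \leq\,\langle\,f\,\rangle_*\,$ from $\,S(x;\,r)\subset\,\Om(x;\,r)\,$ — is exactly what underlies the paper's stated inclusion $\,B_*(\Ov)\subset\,D_*(\Ov)\,$, so your bookkeeping matches the intended argument.
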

To prove Theorem \ref{zeulerin}, we appeal to some estimates
previously obtained in a more general form in reference
\cite{BVJDE}. For clarity, instead of stating these estimates in the
weakest form, strictly necessary to prove the theorem \ref{zeulerin}
below, we rather prefer to show some more general formulations of
the estimates. This allows us to present a short overview on the
structure of the proof of theorem \ref{estmeuler}, suitable for
readers interested in a deeper examination of reference
\cite{BVJDE}. In order to make an easier link with this last
reference, we appeal here to the notation used in \cite{BVJDE}
(compare, for instance, \eqref{naosei} and \eqref{lapsim2} below
with \eqref{naosei2} and \eqref{lapsim}, respectively).%

\vspace{0.2cm}

As already remarked, the velocity $\,\bv(t)\,,$ at each time
$\,t\,,$ can be obtained from the vorticity $\,\z(t)\equiv\,{curl}
\,\bv(t)\,$, by setting, for each fixed $\,t\,,$  $\,\te=\,\z(t)\,$
in the elliptic system
\begin{equation}
\left\{
\begin{array}{l}
{curl} \,\bv=\,\te \quad \textrm{in} \quad \Om \,,
\\
div\,\bv=\,0 \quad \textrm{in} \quad \Om \,,
\\
{curl}
\,\bv \cdot\,\n=\,0 \quad \textrm{on} \quad \Ga \,. %
\end{array}
\right.%
\label{naosei}
\end{equation}
On the other hand, the solution to this system is given by
\begin{equation}
\bv=\,{Rot}\,\ps\,,%
\label{fict}
\end{equation}
where $\ps\,$ is the solution of the elliptic problem
\begin{equation}
\left\{
\begin{array}{l}
-\,\Delta\, \,\ps=\,\te \quad \textrm{in} \quad \Om \,,\\
\ps=\,0 \quad \textrm{on} \quad \Ga \,. %
\end{array}
\right.%
\label{lapsim2}
\end{equation}
So, at least in principle, we may obtain the velocity from the
vorticity. However, since the vorticity is not a priori known, we
start from a "fictitious vorticity" $\,\te(x,\,t)\,,$ and look for a
fixed point $\,\te=\,\z\,.$ In the sequel we replace "fictitious
vorticity" simply by "vorticity", and so on for other quantities.
From each suitable "vorticity" we obtain a "velocity", by appealing
to \eqref{lapsim2} and \eqref{fict}. From this "velocity" we
construct streamlines $\,\U(s,\,t,\,x)\,,$ by appealing to
Lagrangian coordinates. Finally, a well know technique (here
dimension $\,2\,$ is crucial) gives a correspondent fictitious
"vorticity" $\,\z\,.$ So, a map $\,\te \rightarrow \,\z\,$ is,
formally, well defined. A rigorous fixed point was obtained in
reference \cite{BVJDE} in the framework of $C(\Ov)\,$ spaces,
as follows:%

\vspace{0.2cm}

Fix an arbitrary positive time $\,T\,,$ an initial data $\,\bv_0\,$,
and an external force $\,\ff\,$. Set $\,\z_0\equiv\,{curl}
\,\bv_0\,$, $\,\phi\equiv\,{curl}\,\ff\,,$ and define (see
\eqref{bbeesze})
\begin{equation}
B=\,\|\,\z_0\,\| +\,\int_0^T \, \|\,\phi(\ta)\,\| \,d\ta\,.%
\label{zeqa}
\end{equation}%
Further, define the convex, bounded, closed subset of
$\,C(\overline{Q}_T)\,$,
\begin{equation}%
\K=\,\{\,\te \in C(\overline{Q}_T):\,\|\,\te\,\|_T
\leq\,B\,\}\,.%
\label{uiui}
\end{equation}
From now on, the symbol $\,\te=\,\te(x,\,t)\,$ denotes an arbitrary
element of $\,\K\,$. As already explained, the idea is to prove the
existence and uniqueness of a fixed point in $\,\K\,,$ for a
suitable map $\,\Pf\,,$ such that to this fixed point there
corresponds a solution of the Euler equation \eqref{alho} with the
above given data. The map $\,\Pf[\te]=\z\,$ is defined as the
following composition of single maps:
\begin{equation}
\Pf:\quad \te \rightarrow\,\ps \rightarrow\,\bv \rightarrow\,\U
\rightarrow\,\z\,.%
\label{setass}
\end{equation}
Given $\,\te=\,\te(x,\,t) \in\,\K\,$ we get $\,\ps=\,\ps(x,\,t)\,$
by solving the elliptic system \eqref{lapsim2}, where $\,t\,$ is
treated as a parameter. The crucial estimates for $\,\ps(x)\,$
follow from
$$
\ps(x)=\,\int_{\Om} \,g(x,\,y) \,dy,
$$
where $\,g\,$ is the Green function associated to problem
\eqref{lapsim2}. Knowing $\,\ps\,,$ the velocity $\,\bv\,$ is
obtained
by setting $\,\bv(x,\,t)=\,{Rot}\, \ps(x,\,t)\,.$\par%
The next step is to get $\,\z\,,$ from $\,\bv\,$. We introduce the
streamlines $\,\U\,$ associated with the "velocity" $\,\bv(x,\,t)\,$
obtained in the previous step. The streamlines $\,\U(s,\,t,\,x)\,$
are the solution to the system of ordinary differential equations
\begin{equation}
\left\{
\begin{array}{l}
\frac{d}{d\,s}\,\U(s,\,t,\,x)=\,\bv(\,s,\,\U(s,\,t,\,x)\,)\,, \quad
\textrm{for} \quad s\in\,[0,\,T\,]\,,\\
\\
\U(t,\,t,\,x)=\,x\,.
\end{array}
\right.%
\label{uus}
\end{equation}
$\quad\U(s,\,t,\,x)\,$ denotes the position at time $\,s\,$ of the
physical particle which occupies the position $\,x\,$ at time
$\,t\,$. A main tool is here the following estimate (see equation
(2.6) in \cite{BVJDE}).
\begin{equation}
\begin{array}{l}
|\,\U(\,s,\,t,\,x)-\,\U(\,s_1,\,t_1,\,x_1)\,| \leq\\
\\
c_1\,B\,|s-\,s_1| +c_2\,(1+\,c_1\,B\,)\,(\,|x-\,x_1|^\de
+\,|t-\,t_1|^\de\,)\,,
\end{array}
\label{dois6}
\end{equation}%
where $\,c_1\,$ depends only on $\,\Om\,$, $\,\de\equiv
\,e^{-\,c_1\,B\,T}\,,$ and $\,c_2=\,{max}\{1,\,e\,R\}\,,$ where
$\,R$ denotes the diameter of $\,\Om\,.$ Knowing $\,\U\,,$ we set
(\cite{BVJDE}, equation (2.8))
\begin{equation}
\begin{array}{l}
\,\z(t,\,x) =\,\z_0(\U(\,0,\,t,\,x)\,) +\,\int_0^t
\,\pf(\,s,\,\U(s,\,t,\,x)\,) \,ds\\
\\
\equiv\,\z_1(t,\,x)+\,\z_2(t,\,x)\,,%
\end{array}
\label{corlas}
\end{equation}%
where, as already remarked, $ \z_0\,\equiv \,{curl}\,\bv_0\,,$ and $
\,\pf \equiv \,{curl}\,\ff\,.$ The curl of the solution is here
expressed separately in terms of the curls of the initial data and
of the external forces. The main estimates for these two terms were
proved in \cite{BVJDE}, respectively in lemmas 4.3 and 4.2. The
reader may verify that the control of the external forces term is
much more involved than that of the initial data term.\par%
The composition map $\,\Pf\,[\te]=\z\,$ turns out to be well defined
over $\,\K\,,$ by appealing to \eqref{setass}. In the proof of
theorem \ref{ppum} in \cite{BVJDE}, we close the above scheme by
showing that $\,\Pf\,(\,\K\,)\subset\,\K\,,$ and that there is a
(unique) fixed point in $\,\K\,.$ Finally, it was proved that this
fixed point is the curl of the solution to the Euler equations
\eqref{alho}. The velocity follows from the curl by appealing to
\eqref{naosei}.\par%
After this flying visit to the proof of theorem \ref{ppum}, we prove
the theorem \ref{zeulerin}.

\vspace{0.2cm}

\emph{Proof of theorem  \ref{zeulerin}}: A main tool in proving the
regularity theorem \ref{eulaszeta} for data in $C_*(\Ov)\,$ is the
lemma \ref{columbus} (see the Lemma 4.1 in \cite{BVJDE}). The
absence of external forces $\,\ff\,$ lead us to revive below,
directly, the simple idea used in the proof of this lemma, without
appealing to the original statement itself.\par%
We deal with solutions whose existence is already guaranteed by
theorem \ref{ppum}. We merely  want to show the additional
regularity claimed in theorem \ref{zeulerin}. Since in this theorem
the external forces vanish, the following very simplified form of
\eqref{dois6} holds.
\begin{equation}
|\,\U(0,\,t,\,x) -\,\U(0,\,t,\,y)\,| \leq\,K \,|x-\,y|^{\de}\,,
\label{uus}
\end{equation}%
where here $\de=\,e^{-\,c_1\,B\,T}\,,$ and $\,B=\,\|\,\z_0\,\|\,$.
Following \eqref{corlas}, and taking into account that $\,\z_2\,$
vanishes, one has $\,\z=\,\z_1\,.$ So the curl of the solution
$\,\bv\,$ to the Euler equation \eqref{alho} is simply given by
$$
\z(t,\,x)=\,\z_0 ( \U(0,\,t,\,x)\,)\,.
$$
It follows that
\begin{equation}
\begin{array}{l}
\om_{\z(t)}(x;\,r)=\, \sup_{y \in\,\Ov(x;\,r)}\, |\,\z
(t,\,x)-\,\z_1(t,\,y)\,|=\\
\\
\sup_{y \in\,\Ov(x;\,r)}\,|\,\z_0 (
\U(0,\,t,\,x)\,)-\,\z_0 ( \U(0,\,t,\,y)\,)\,|\,.%
\end{array}%
\label{zuus}
\end{equation}%
Further, by appealing to \eqref{uus}, one gets
\begin{equation}
\om_{\z(t)}(x;\,r) \leq\,\om_{\z_0}(\U(0,\,t,\,x);\,K\,r^\de)\,.
\label{zuava}
\end{equation}%
So, by recalling definition \eqref{seis-bis}, one has
\begin{equation}
\langle\,\z(t)\,\rangle_* \equiv\,\sup_{x \in\,\Ov}\,\int_0^\ro \,
\om_{\z(t)}(x;\,r) \frac{d\,r}{r} \leq\,\sup_{x \in\,\Ov
}\,\int_0^{\ro}
\om_{\z_0}(\U(0,\,t,\,x);\,K\,r^\de)\,  \frac{d\,r}{r}\,.%
\label{zuava}
\end{equation}
Since
$$
\big\{\,\U(0,\,t,\,x):\,x\in\,\Ov\,\big\}=\,\Ov\,,
$$
it follows, by appealing to the change of variables
$\,\ta=\,K\,r^\de\,,$ that
\begin{equation}
\begin{array}{l}
\langle\,\z(t)\,\rangle_* \leq\,\sup_{\ox \in\,\Ov }\,\int_0^{\ro}
\om_{\z_0}(\ox;\,K\,r^\de) \, \frac{d\,r}{r}=\\
\\
\frac{1}{\de}\,\sup_{x \in\,\Ov }\,\int_0^{K\,\ro^{\de}}
\om_{\z_0}(x;\,\ta)\, \frac{d\,\ta}{\ta} \equiv\, \frac{1}{\de}
\,\langle\,\z_0\,\rangle_{*,\,K\,\ro^{\de}}\,,%
\label{zic}
\end{array}
\end{equation}
with obvious notation. On the other hand,
$\,\|\,\z(t)\,\|=\,\,\|\,\z_0\,\|\,,$ for all $\,t\,.$ Since
\eqref{igual} also applies for $\,B_*\,$ semi-norms, one shows that
$$
|\|\,\z(t)\,\|_{*} \leq\,C_T\,|\|\,\z_0\,\|_{*}\,,
$$
for all $\,t \in\,[0,\,T]\,.$ Theorem \ref{zeulerin} is proved.
Theorem \ref{zeulerindos} follows by appealing to Theorem \ref{lepasb}\,.\par%

\vspace{0.2cm}

It would be interesting to prove theorem \ref{zeulerin} in the
presence of external forces, even in a simplified version, for
instance, $\,{curl}\,\ff \in \,C(\,\R^+;\, B_*(\Ov)\,)\,$. We
believe that a (possibly modified) version of this result holds by
appealing to the measure preserving properties of the streamlines,
together with the control of the linear dimensions of figures in
finite time.

\end{document}